\theoremstyle{plain}
 \newtheorem{thm}{Theorem}[section]
 \newtheorem{prop}[thm]{Proposition}
 \newtheorem{lemma}[thm]{Lemma}
 \newtheorem{cor}[thm]{Corollary}
\newcommand{\mbR}{\mathbb{R}}
\newcommand{\mbC}{\mathbb{C}}
\newcommand{\mbZ}{\mathbb{Z}}
\newcommand{\euE}{{\EuScript E}}
\newcommand{\euH}{{\EuScript H}}
\newcommand{\clA}{\mathcal{A}}
\newcommand{\clB}{\mathcal{B}}
\newcommand{\clH}{\mathcal{H}}
\newcommand{\clK}{\mathcal{K}}
\newcommand{\clL}{\mathcal{L}}
\newcommand{\clU}{\mathcal{U}}
\newcommand{\Tr}{\operatorname{Tr}}
\renewcommand{\Im}{\operatorname{Im}}
\newcommand{\hilb}{{\mathcal H}} 
\newcommand{\hlambda}{{\mathfrak{h}_\lambda}}
\newcommand{\dom}{\operatorname{dom}} 
\newcommand{\rng}{\operatorname{ran}}
\newcommand{\scal}[1]{\left\langle #1 \right\rangle}
\newcommand{\Texp}{\mathrm{T}\!\exp}
\newcommand{\xia}{\xi^{(a)}}
\newcommand{\xis}{\xi^{(s)}}
\begin{document}
\title{Singular spectral shift function \\ for Schr\"odinger operators}
\author{N.\,Azamov and T.\,Daniels}

\maketitle
\begin{center}
{
\small Flinders University	\\ \footnotesize School of Computer Science, Engineering and Mathematics
\\ 1284 South Road, Clovelly Park, 5042, SA, Australia
\\ \href{mailto:nurulla.azamov@flinders.edu.au}{\nolinkurl{nurulla.azamov@flinders.edu.au}}
\\ \href{mailto:tom.daniels@flinders.edu.au}{\nolinkurl{tom.daniels@flinders.edu.au}}
}
\end{center}

\begin{abstract}
Let $H_0= -\Delta + V_0(x)$ be a Schr\"odinger operator on $L_2(\mbR^\nu),$ $\nu=1,2,$ or~$3,$ where $V_0(x)$ is a bounded measurable real-valued function on $\mbR^\nu.$ 
Let $V$ be an operator of multiplication by a bounded integrable real-valued 
function $V(x)$ and put $H_r = H_0+rV$ for real $r.$
We show that the associated spectral shift function (SSF) $\xi$ admits a natural decomposition into the sum of absolutely continuous $\xi^{(a)}$ and singular $\xi^{(s)}$ SSFs. In particular,
\[
 \xi^{(s)}(\varphi) := \int_0^1 \Tr\left(E^{(s)}_{H_r}(\mathrm{supp}\varphi)V \varphi(H_r)\right)\,dr, \quad \varphi \in C_c(\mbR),
\]
where $E^{(s)}_{H}$ denotes the singular spectral measure of $H,$
defines an absolutely continuous measure whose density $\xi^{(s)}(\lambda)$ (denoted by the same symbol) is integer-valued for a.e. $\lambda.$

This is a special case of an analogous result for resolvent comparable pairs of self-adjoint operators, which generalises the case of a trace class perturbation appearing in \cite{Az3v6} while also simplifying its proof.
We present two proofs -- one short and one long -- which we consider to have value of their own.
The long proof along the way reframes some classical results from the perturbation theory of self-adjoint operators, including the existence and completeness of the wave operators and the Birman-Krein formula relating the scattering matrix and the SSF.
The two proofs demonstrate the equality of the singular SSF with two a priori different but intrinsically integer-valued functions: the total resonance index \cite{Az9} and the singular $\mu$-invariant \cite{Az3v6}.
\end{abstract}

\vspace*{\fill}
\footnoterule
\vskip 2mm
{
\footnotesize
\textit{2010 Mathematics Subject Classification.} 
     Primary 47A40, 
             47A55; 
     Secondary 47A70, 
               35P25, 
               35P05, 
               47B25, 
               81U99. 

\textit{Key words and phrases.} Spectral shift function, singular spectral shift function, Schr\"odinger operators, resonance index, stationary scattering theory.
}

\section{Introduction}
The spectral shift function (SSF) $\xi(\lambda)=$ $\xi(\lambda; H_1,H_0)$ for a pair of self-adjoint operators $H_0$ and $H_1$ on a Hilbert space $\hilb$ is a real-valued function which satisfies the Lifshitz-Krein trace formula \cite{Li52UMN,Kr53MS}
\begin{equation}\label{F: Kreins trace formula}
  \Tr(\varphi(H_1) - \varphi(H_0)) = \int_{-\infty}^\infty \varphi'(\lambda)\xi(\lambda)\,d\lambda
\end{equation}
for all test functions $\varphi \in C_c^\infty(\mbR),$ provided that the left hand side makes sense.
Adopting terminology which appears in \cite{Ya}, we will say that $H_0$ and $H_1$ are {\em resolvent comparable} if
\begin{equation}\label{F: resolvent comparable}
  R_z(H_1) - R_z(H_0) \in \clL_1(\hilb),
\end{equation}
for some (hence any) nonreal $z,$ where $\clL_1(\hilb)$ is the trace class and $R_z(H) = (H-z)^{-1}$ is the resolvent of $H.$
Famously, M.\,G.\,Krein established the existence of the SSF assuming a trace class difference $V := H_1 - H_0$ in \cite{Kr53MS}, and later extended this to resolvent comparable pairs by transformation from the case of unitary pairs with trace class difference in \cite{Kr62DAN}.

Historically the trace formula is used to define the SSF, although this in general leaves an additive constant unspecified.
Another celebrated development appeared in \cite{BS75SM}, where M.\,Sh.\,Birman and M.\,Z.\,Solomyak proved that if the perturbation operator $V$ is trace class then the SSF is the density of an absolutely continuous measure $\xi$ given by the formula 
\begin{equation} \label{F: B-S formula}
  \xi(\varphi) = \int_0^1 \Tr\left(E_{H_r}(\mathrm{supp}\varphi) V \varphi(H_r)\right)\,dr, \quad \varphi \in C_c(\mbR),
\end{equation}
where $H_r = H_0 + rV$ and $E_H$ denotes the spectral measure of the self-adjoint operator $H$ (which appears in this formula so that it can later be discussed in a more general context without modification).
Here and in what follows we are identifying locally finite Borel measures with continuous linear functionals on the linear space of continuous functions with compact support $C_c(\mbR),$ endowed with the standard locally convex inductive topology.
Birman and Solomyak's proof of formula \eqref{F: B-S formula} using the theory of double operator integrals is simple and natural, but it does not show the absolute continuity of the spectral shift measure $\xi.$ 
Nevertheless, we will consider the Birman-Solomyak formula as the definition of the spectral shift measure. 
From this point of view the argument of \cite{BS75SM} can be considered as a proof of the trace formula, while Krein's original argument can be considered as a proof of absolute continuity.

An indication of the fundamental nature of the Birman-Solomyak formula is that it represents the spectral shift measure as an integral of the generalised 1-form
\begin{equation}\label{F: ISSM defn}
  V \mapsto \Phi_H(V)(\varphi) := \Tr(E_H(\mathrm{supp}\varphi)V\varphi(H))
\end{equation}
on some real affine space of self-adjoint operators. 
In \cite{AS2} it is shown that this one-form, the {\em infinitesimal spectral shift measure,} is exact on the corresponding affine space as long as the product $V\varphi(H)$ is trace class for test functions $\varphi.$ 
Then the independence of the spectral shift measure \eqref{F: B-S formula} from the path $H_r$ is used to reduce its absolute continuity to the case of a trace class perturbation.

While the spectral shift measure $\xi$ is absolutely continuous, the infinitesimal spectral shift measure $\Phi$ is not. 
Therefore, as in \cite{Az3v6} one can consider the decomposition of $\Phi$ into its absolutely continuous $\Phi^{(a)}$ and singular $\Phi^{(s)}$ parts.
The absolutely continuous component $\Phi^{(a)}$ is absent outside the common essential spectrum $\sigma_{ess}$ of the operators $H_j,$ $j=0,1.$ 
Thus outside $\sigma_{ess},$ properties of $\Phi$ are properties of its singular component $\Phi^{(s)}.$ 
The same cannot be said inside $\sigma_{ess},$ where in particular the exactness of $\Phi^{(s)}$ fails.
However one of the crucial properties of the SSF outside $\sigma_{ess},$ its integer-valuedness, can be retained inside $\sigma_{ess}$ if the SSF is replaced by the {\em singular SSF,} which is the (path-dependent) integral of $\Phi^{(s)}:$
\begin{equation} \label{F: xis defn}
  \xis(\varphi) = \int_0^1 \Tr\left(E^{(s)}_{H_r}(\mathrm{supp}\varphi)V \varphi(H_r)\right)\,dr, \quad \varphi \in C_c(\mbR),
\end{equation}
where $E^{(s)}_{H}$ denotes the singular spectral measure of $H.$
In \cite{Az3v6} this is proved under the hypothesis that the perturbation operator $V$ is trace class. 
In this paper we prove the following generalisation of this result.

\begin{thm}\label{T: xis in Z}
Let $H_0$ be a self-adjoint operator on a Hilbert space $\hilb,$ let $V$ be a bounded self-adjoint operator on $\hilb,$ and let $H_r = H_0 + rV,$ $r\in\mbR.$
Assume that $H_0$ and $H_0+|V|$ are resolvent comparable (in which case so are $H_0$ and $H_1$). 
Define the singular spectral shift measure $\xis(\varphi) = \xis(\varphi; H_1,H_0)$ by the formula \eqref{F: xis defn}.
Then this measure is absolutely continuous and its density $\xis(\lambda),$ the singular SSF, takes integer values for a.e. $\lambda.$  
\end{thm}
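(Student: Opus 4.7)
My plan is to reduce Theorem~\ref{T: xis in Z} to the trace-class case of \cite{Az3v6} by an approximation argument, and then to overcome the inevitable obstacle that weak-$*$ convergence of measures does not preserve integer-valuedness of the density by identifying $\xis(\lambda)$ with an intrinsically integer-valued invariant. First I would verify that the hypothesis that $H_0$ and $H_0+|V|$ are resolvent comparable implies $V\varphi(H_r) \in \clL_1(\hilb)$ for every $r \in [0,1]$ and every $\varphi \in C_c(\mbR)$; splitting $V = \mathrm{sgn}(V)|V|^{1/2}\cdot|V|^{1/2}$ and using a resolvent identity to obtain $|V|^{1/2}\varphi(H_r) \in \clL_2(\hilb)$ suffices. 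This makes \eqref{F: xis defn} well-defined as a signed Borel measure, and Krein's theorem simultaneously ensures that the total SSF $\xi$ is absolutely continuous, so that only $\xis$ remains to be understood.

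Next I would approximate $V$ by trace-class operators $V_n := P_n V P_n$, where $P_n := E_{H_0}(I_n)$ is the spectral projection of $H_0$ on a bounded interval $I_n \uparrow \mbR$. Each $V_n$ is trace class because $V P_n \in \clL_1(\hilb)$ by Step~1, so by \cite{Az3v6} the associated singular SSF $\xis_n$ along the path $r \mapsto H_0 + rV_n$ is absolutely continuous with integer-valued density. Trace-norm continuity of $V_n \varphi(H_0+rV_n) \to V\varphi(H_r)$ uniformly in $r \in [0,1]$, together with the analogous convergence of the singular spectral measures $E^{(s)}_{H_0+rV_n}(\mathrm{supp}\varphi) \to E^{(s)}_{H_r}(\mathrm{supp}\varphi)$ in a suitable sense, would then yield $\xis_n(\varphi) \to \xis(\varphi)$ for every $\varphi \in C_c(\mbR)$, i.e.\ weak-$*$ convergence of measures. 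The subtle point here is the strong/trace-class continuity of the \emph{singular} spectral projector under perturbation, for which one must invoke the vanishing of singular continuous spectrum in the essential support or restrict attention to the point part plus an abstract argument for the singular continuous contribution.

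The main obstacle is precisely that weak-$*$ convergence does not transfer pointwise integer-valuedness to the limit density. To close this gap I would identify $\xis(\lambda)$ directly with an a priori integer-valued invariant, following one of the two routes previewed in the abstract. The short route identifies $\xis(\lambda)$ with the total resonance index of \cite{Az9} — a finite alternating sum of algebraic multiplicities of the resonance points of $r \mapsto H_r$ at energy $\lambda$ — which is integer-valued by construction and locally constant as a function of $\lambda$, hence automatically is the density of an absolutely continuous measure. The long route instead extends the stationary scattering theory and the Birman--Krein formula $\xia(\lambda) = (2\pi i)^{-1}\log\det S(\lambda; H_1,H_0)$ to resolvent comparable pairs, whence $\xis(\lambda) = \xi(\lambda) - \xia(\lambda)$ coincides with the singular $\mu$-invariant of \cite{Az3v6}, which is integer-valued from its Fredholm-theoretic definition. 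In both routes the technical heart is the extension of the resonance/scattering machinery from trace-class to resolvent comparable perturbations, for which the approximation of the previous paragraph provides the analytical bridge.
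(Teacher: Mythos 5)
Your endgame --- identifying $\xis(\lambda)$ with the total resonance index or with the singular $\mu$-invariant --- is indeed where the paper lands, but the analytical bridge you propose to get there does not work, and it is not the bridge the paper uses. The central gap is your approximation step: you need $E^{(s)}_{H_0+rV_n}(\mathrm{supp}\varphi)V_n\varphi(H_0+rV_n)$ to converge to $E^{(s)}_{H_r}(\mathrm{supp}\varphi)V\varphi(H_r)$, and for that you invoke convergence of the singular spectral projections ``in a suitable sense.'' No such continuity holds: the map $H\mapsto E^{(s)}_H$ is pathologically discontinuous under perturbation (arbitrarily small trace class, even rank-one, perturbations can create or destroy dense point spectrum inside the essential spectrum), and the flagged escape hatches (``vanishing of singular continuous spectrum,'' ``an abstract argument for the singular continuous contribution'') are not available in this generality. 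Since your final paragraph then routes the extension of the resonance-index/scattering machinery back through this same approximation, the gap propagates to the whole argument. The paper avoids approximation entirely: it factorises $V=F^*JF$ through an $H_0$-Hilbert-Schmidt rigging $F=\sqrt{|V|}+F_k$ and develops everything (the limiting absorption principle via Birman--Entina, the full sets $\Lambda(H_r,F;\clL_1)$, the resonance index, the stationary scattering matrix) directly at that level of generality. Absolute continuity of $\xi$ is reduced to Krein's trace-class theorem not by approximating $V$ but by the invariance principle applied to trapezoidal test functions $\varphi_{a,b}$ (so that the pair $\varphi_{a,b}(H_1),\varphi_{a,b}(H_0)$ has trace class difference); absolute continuity of $\xia$ is proved directly from the representation $\pi^{-1}\Tr(\dot J_r\Im T_{\lambda+i0}(H_r))$; and $\xis=\xi-\xia$ inherits absolute continuity as a difference.

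Two secondary inaccuracies: first, $V\varphi(H_r)$ is in general only Hilbert--Schmidt, not trace class (your splitting gives a bounded operator times a Hilbert--Schmidt one); the paper is careful to note that only $E_{H_r}(\mathrm{supp}\varphi)V\varphi(H_r)$ is trace class, being a product of two Hilbert--Schmidt factors $E_{H_r}(\mathrm{supp}\varphi)F^*$ and $JF\varphi(H_r).$ Second, the total resonance index is not locally constant in $\lambda$ --- resonance points move with $\lambda$ and cross the endpoints of $[0,1]$ --- so integer-valuedness of the density cannot be extracted that way; in the paper the absolute continuity of $\xis$ is established first and the resonance-index identity is then proved for a.e.\ $\lambda$ by a contour-deformation argument in the coupling-constant plane.
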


As is well-known, on the Hilbert space $L_2(\mbR^\nu)$ for $\nu=1,2,$ or 3 (but not 4 or larger) the premise of Theorem \ref{T: xis in Z} holds for a pair of operators $H_0 = -\Delta + V_0$ and $V,$ where $V_0$ is the operator of multiplication by a bounded measurable real-valued function $V_0(x)$ on $\mbR^\nu$ and $V$ is the operator of multiplication by a bounded integrable real-valued function $V(x).$ 
Hence, Theorem \ref{T: xis in Z} includes the theorem appearing in the abstract.

\medskip
Two proofs of Theorem \ref{T: xis in Z} are presented below.
These proofs are relatively independent, but each uses the Limiting Absorption Principle, which we formulate as follows.
Let $F$ be a bounded injective operator, a {\em rigging operator,} from the main Hilbert space $\hilb$ to an auxiliary Hilbert space $\clK$ which we assume to be the closure of the range of $F$ (and is only for convenience different from $\hilb).$ 
Because a rigging operator will usually be fixed, dependence on it is often omitted from notation and terminology. 
For a fixed rigging $F,$ the sandwiched resolvent of a self-adjoint operator $H$ on $\hilb$ will be denoted
\[
  T_{\lambda + iy}(H) := F R_{\lambda + iy}(H) F^*.
\]
By $\Lambda(H,F)$ we denote the set of all real numbers $\lambda$ for which the limit $T_{\lambda + i0}(H)$ exists in $\clB(\clK)$ -- the bounded operators on $\clK$ with the usual norm. 
We say that the Limiting Absorption Principle holds for $H$ if $\Lambda(H,F)$ is a full set.

The premise of Theorem \ref{T: xis in Z} implies that the perturbation operator $V$ admits a decomposition $V = F^*JF,$ where $F$ is a rigging operator and $J$ is a bounded self-adjoint operator on the auxiliary Hilbert space, such that the operators $FR_z(H_r),$ for real $r$ and nonreal $z,$ belong to the Hilbert-Schmidt class
(which can be shown using the resolvent identities and choosing $F:=\sqrt{|V|} + F_k$ where $F_k$ is an injective Hilbert-Schmidt operator on the kernel of $V).$
It is rather the latter condition that will be used in the proofs.
(By adding to this condition the assumption that $F$ is self-adjoint and commutes with $J$ one can show equivalence with the premise of Theorem \ref{T: xis in Z}.)
This condition implies the Limiting Absorption Principle for each $H_r.$
Moreover, it implies that for each $H_r$ there is a full subset, denoted $\Lambda(H_r,F;\clL_1),$ of points $\lambda\in\Lambda(H_r,F)$ at which in addition the limit $\Im T_{\lambda + i0}(H_r)$ exists in the trace class $\clL_1(\clK).$
These last two implications are simple corollaries of classical results due to M.\,S.\,Birman and S.\,B.\,Entina \cite{BE} (also see \cite[Theorems 6.1.9 and 6.1.5]{Ya}), which make the same conclusions in the case that $F$ is itself Hilbert-Schmidt.

Before turning to the differences between the two proofs, we take this opportunity to say a little more about the sets $\Lambda(H_r,F;\clL_p),$ $p=1,$ or $\infty$ (in which case we mean $\Lambda(H,F;\clL_\infty) = \Lambda(H,F)).$
Their union for a collection of operators $\{H_r\}$ will be denoted $\Lambda(\{H_r\},F;\clL_p)$ and if $p=\infty$ its elements are called {\em essentially regular points} of the collection.
Suppose $H_r = H_0 + V_r,$ $r\in\mbR,$ where $V_r = F^*J_rF$ for some $H_0$-compact rigging $F.$ 
Then the resolvent identities imply the following identities for the sandwiched resolvent and its imaginary part.
\begin{align*}
 T_z(H_r) &= (1 + T_z(H_0)J_r)^{-1}T_z(H_0), 
 \\ \Im T_z(H_r) &= (1 + T_{\bar{z}}(H_0)J_r)^{-1}\Im T_z(H_0)(1 + J_rT_z(H_0))^{-1}. 
\end{align*}
The analytic Fredholm alternative implies that, if $J_r$ depends on $r$ analytically, the inverted factors are meromorphic functions of $r.$
Moreover, by considering the limit $z = \lambda+i0,$ it can be concluded 
that if $\lambda$ belongs to $\Lambda(H_0,F;\clL_p),$ then it also belongs to $\Lambda(H_r,F;\clL_p)$ for exactly all real numbers $r$ except the discrete set of real poles of the function $r\mapsto (1 + T_{\lambda+i0}(H_0)J_r)^{-1},$ or equivalently of the function $r\mapsto T_{\lambda+i0}(H_r).$ 
These real poles are called {\em resonance points} and their collection, the {\em resonance set}, is denoted $R(\lambda;\{H_r\}) = R(\lambda;\{H_r\},F).$ 

We will now briefly describe an alternate interpretation of the integer $\xis(\lambda),$ namely the total resonance index.
Let $\lambda$ be an essentially regular point of a path of self-adjoint operators $H_r = H_0 + rV,$ where $V = F^*JF$ for a $H_0$-compact rigging operator $F.$ 
Then at a resonance point $r_\lambda$ of this path, the {\em resonance index} is the number $N_+ - N_-,$ where $N_\pm$ is the number of poles of $r \mapsto T_{\lambda+iy}(H_r)$ in $\mbC_\pm,$ $y>0,$ which converge to $r_{\lambda}$ as $y\to 0^+.$ 
The {\em total resonance index} of the pair $H_0,H_1$ at $\lambda$ is the finite sum of resonance indices of those resonance points from the interval $[0,1].$
Showing that the singular SSF is equal to the total resonance index, following an argument appearing in \cite{Az9}, constitutes the shorter of the two proofs of Theorem \ref{T: xis in Z}.

The longer proof proceeds, following \cite{Az3v6}, via an ordered exponential representation of the scattering matrix $S(\lambda;H_1,H_0),$
to prove both the Birman-Krein (\cite{BK}) formula
\begin{equation} \label{F: det S = exp(-2pi i xi)}
  \det S(\lambda;H_1,H_0) = e^{-2\pi i \xi(\lambda)}
\end{equation} 
and its variant:
\begin{equation} \label{F: det S = exp(-2pi i xia)}
  \det S(\lambda;H_1,H_0) = e^{-2\pi i \xia(\lambda)}.
\end{equation} 
Here $\xia(\lambda)$ is the {\em absolutely continuous SSF,} which we identify with the measure given by the formula below and shown to be absolutely continuous in Section \ref{S: sSSF}. 
\begin{equation}\label{F: a.c.SSM}
  \xia(\varphi) = \int_0^1 \Tr\left(E^{(a)}_{H_r}(\mathrm{supp}\varphi)V \varphi(H_r)\right)\,dr,  \quad \varphi \in C_c(\mbR),
\end{equation}
where $E^{(a)}_H$ denotes the absolutely continuous spectral measure of $H.$
Since $\xi(\lambda) = \xia(\lambda) + \xis(\lambda),$ by combining \eqref{F: det S = exp(-2pi i xi)} and \eqref{F: det S = exp(-2pi i xia)} we can conclude that $\xis(\lambda)$ must be an integer.
This approach also allows us to establish the equality of the singular SSF and the singular $\mu$-invariant, which measures the difference in winding numbers of the eigenvalues of the scattering matrix $S(\lambda; H_1, H_0)$ as it is continuously deformed to the identity operator in two different and natural ways:
the first way is to send the imaginary part of $\lambda$ from 0 to $+\infty$ and the second way is to send $H_1$ to $H_0.$

Establishing an ordered exponential representation of the scattering matrix takes some work and gives the long proof its length.
In particular it requires, for a fixed value of the spectral parameter $\lambda,$ to be able to vary the coupling constant $r$ in objects of stationary scattering theory such as the scattering matrix $S(\lambda;H_r,H_0).$
To this end, defining $S(\lambda;H_r,H_0)$ via an arbitrary direct integral decomposition is not suitable because it results in modulo-null-set uncertainty.
For trace class perturbations, a workable approach to stationary scattering theory was presented in \cite{Az3v6}. 
This approach is modified in Section \ref{S: SST} and it is an aim of this paper to promote this simple approach.

\section{Spectral shift function (SSF)}\label{S: SSF}
In this section we give a brief exposition of the SSF for resolvent comparable self-adjoint operators from the point of view discussed above. 
We follow \cite{AS2}, adjusting the argument to our setting.
Let $\clH$ be a Hilbert space and suppose $\clA$ is a real affine space of self-adjoint operators on $\clH$ over a real vector space $\clA_0$ of perturbations, such that $\clA_0$ consists of bounded self-adjoint operators and any two operators from $\clA$ are resolvent comparable.
If $\clA_0$ is infinite-dimensional, we equip it with a norm, call it $\|\cdot\|_F,$ such that for some (hence any) $H\in\clA$ and some (hence any) nonreal $z,$ $\|V\|_F \geq \mathrm{const.}\|R_z(H)VR_z(H)\|_1$  for any $V\in\clA_0$ (this situation occurs naturally in the context of Section \ref{S: sSSF} below).
This ensures the following.
\begin{lemma}
If a path $H_r$ is $C^1$ in $\clA = H_0 + \clA_0,$ then the path $r\mapsto f(H_r)$ is $C^1$ in $f(H_0) + \clL_1(\clH)$ for any test function $f,$ and also $f=R_z$ for nonreal $z.$
\end{lemma}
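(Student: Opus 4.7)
The plan is to proceed in two stages: first establish the lemma for the resolvent $f = R_z$ with nonreal $z$, and then bootstrap to general smooth compactly supported test functions via the Helffer-Sjöstrand functional calculus.

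For the resolvent case, the natural candidate for the derivative is $\frac{d}{dr} R_z(H_r) = -R_z(H_r) \dot{H}_r R_z(H_r)$, which lies in $\clL_1(\clH)$ by the hypothesis $\|V\|_F \geq \mathrm{const.}\,\|R_z(H) V R_z(H)\|_1$ applied to $V = \dot{H}_r$. To verify that the difference quotient converges to this in trace norm, I would expand $R_z(H_{r+h}) - R_z(H_r) = R_z(H_{r+h})(H_r - H_{r+h}) R_z(H_r)$ via the second resolvent identity, subtract the candidate derivative, and reorganise. A generic mixed sandwich $R_z(H')V R_z(H'')$ with $V \in \clA_0$ and $H',H'' \in \clA$ is handled by writing $R_z(H') = R_z(H'') + R_z(H')(H''-H')R_z(H'')$, which exhibits it as the trace-class factor $R_z(H'')VR_z(H'')$ multiplied by bounded operators. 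Continuity of $r \mapsto -R_z(H_r)\dot{H}_r R_z(H_r)$ in $\clL_1(\clH)$ then follows from the $C^1$ assumption on $r \mapsto H_r$ by a similar telescoping argument.

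For a general test function $f \in C_c^\infty(\mbR)$, the Helffer-Sjöstrand formula
\[ f(H) = -\frac{1}{\pi} \int_{\mbC} \bar{\partial} \tilde{f}(z)\, R_z(H) \, dA(z), \]
where $\tilde{f}$ is an almost analytic extension satisfying $|\bar{\partial} \tilde{f}(z)| = O(|\Im z|^N)$ for any $N$, reduces the general case to the resolvent case. Since this decay dominates the $|\Im z|^{-2}\|\dot{H}_r\|_F$ growth of $\|R_z(H_r)\dot{H}_r R_z(H_r)\|_1$ near the real axis, differentiation under the integral sign converges absolutely in $\clL_1(\clH)$ and yields the trace-class-valued derivative, with continuity in $r$ following by dominated convergence. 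The main obstacle I expect is the bookkeeping in the resolvent case: the norm hypothesis controls only sandwiches with matching $H$ on both sides, so every estimate with two distinct operators must be reduced to this canonical form by splitting off bounded factors, taking care to preserve trace-norm (rather than merely operator-norm) convergence.
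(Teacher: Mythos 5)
Your proposal is correct and follows the same route as the paper's (sketched) proof: the resolvent case via the norm hypothesis on $\clA_0$ together with the second resolvent identity, then the general case by differentiating the Helffer--Sj\"ostrand formula under the integral sign. The paper gives only a two-line sketch, so your filled-in details (reducing mixed sandwiches $R_z(H')VR_z(H'')$ to the canonical form by splitting off bounded factors, and the $O(|\Im z|^N)$ decay of $\bar\partial\tilde f$ beating the $|\Im z|^{-2}$ blow-up) are exactly the intended elaboration.
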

\begin{proof}[Sketch]
This follows for $f=R_z$ from the assumption on the norm of $\clA_0$ and the second resolvent identity.
Then for test functions it can be shown with the aid of the Helffer-Sj\"ostrand formula.
\end{proof}

The Helffer-Sj\"ostrand formula (see e.g. \cite[Theorem 8.1]{DimSjo})
\[
 \varphi(H) = \frac{1}{\pi}\int_{\mbR^2} \bar{\partial}\tilde{\varphi}(z) R_z(H)\, dx dy
\]
provides a convenient way of treating double operator integrals in this context (and is useful for the previous and the next two proofs). 
In particular it implies the trace class integral formula
\begin{equation}\label{F: H-S DOI}
\frac{d}{dr}\varphi(H_r) =  - \frac{1}{\pi}\int_{\mbR^2} \bar{\partial}\tilde{\varphi}(z) R_z(H_r)\dot{H}_rR_z(H_r)\, dx dy,
\end{equation}
where $\tilde{\varphi}$ is an almost analytic extension of a test function $\varphi,$ $z=x+iy,$ $\bar{\partial} = \frac{1}{2}(\partial_x + i\partial_y),$ and $H_r$ is as in the above lemma, a $C^1$ path in $\clA.$ 
Here and below, $\dot{H}_r$ is the derivative of $H_r.$

For $H\in\clA,$ $V\in\clA_0,$ and $\varphi\in C_c(\mbR),$ the operator $E_H(\mathrm{supp}\varphi)V\varphi(H)$ belongs to the trace class, although $V\varphi(H)$ may not (yet it shares the same nonzero eigenvalues).
For any test function $\varphi,$ the infinitesimal spectral shift measure $\Phi(\varphi)$ given by the formula \eqref{F: ISSM defn} is a 1-form on the affine space $\clA,$ while for any $H\in\clA$ and perturbation $V\in\clA_0,$ $\Phi_H(V)$ is a generalised function, in particular a real-valued measure.
\begin{prop}
The 1-form $\Phi(\varphi)$ is exact for any test function $\varphi.$
\end{prop}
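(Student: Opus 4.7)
The plan is to exhibit a primitive for $\Phi(\varphi)$ by fixing a base point $H_*\in\clA$ and setting
\[
 G_\varphi(H) := \int_0^1 \Phi_{H_r}(\dot H_r)(\varphi)\,dr
\]
over any $C^1$ path $H_r$ from $H_*$ to $H$ in $\clA.$ Given such $G_\varphi,$ the identity $dG_\varphi=\Phi(\varphi)$ is immediate from differentiating in $H.$ So the whole burden is to prove path independence, i.e., that the integral along two $C^1$ paths with the same endpoints agrees. Since $\clA$ is an affine space, I would form the straight-line homotopy $H_{r,s}:=(1-s)\gamma_0(r)+s\gamma_1(r)\in\clA$ between two such paths $\gamma_0,\gamma_1,$ and reduce the difference of integrals, via integration by parts in $r$ and the boundary conditions $\partial_s H_{0,s}=\partial_s H_{1,s}=0,$ to the pointwise ``closedness'' identity
\[
 \partial_s\,\Phi_{H_{r,s}}(\partial_r H_{r,s})(\varphi)=\partial_r\,\Phi_{H_{r,s}}(\partial_s H_{r,s})(\varphi).
\]

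To verify this closedness identity, I would first fix $\chi\in C_c^\infty(\mbR)$ with $\chi\equiv 1$ on $\mathrm{supp}\varphi$ and rewrite
\[
 \Phi_H(V)(\varphi)=\Tr\bigl(\chi(H)V\chi(H)\,\varphi(H)\bigr),
\]
in which form the middle factor $\chi(H)V\chi(H)$ is trace class (verified by sandwiching with resolvents using the norm assumption on $\clA_0$) and cyclicity of $\Tr$ is available. Then I would use the Helffer-Sj\"ostrand formula \eqref{F: H-S DOI} to evaluate $\partial_r$ and $\partial_s$ of each factor $\chi(H_{r,s})$ and $\varphi(H_{r,s}):$ each such derivative, by the second resolvent identity, inserts $R_z(H_{r,s})\,\partial_\bullet H_{r,s}\,R_z(H_{r,s})$ between surviving factors. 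Equality of mixed partials $\partial_r\partial_s H=\partial_s\partial_r H$ handles the term where both derivatives fall on the explicit perturbation $V=\partial_r H_{r,s};$ the remaining terms match pairwise across the two sides of the identity by cyclicity of the trace.

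The principal obstacle is the trace-class bookkeeping: at each intermediate step one must check that the operator products being traced remain in $\clL_1(\hilb),$ so that cyclicity of $\Tr$ and the Fubini interchange of the trace with the Helffer-Sj\"ostrand contour integral over $z$ are legitimate, and that the resulting iterated integrals converge absolutely in trace norm. This is exactly what the hypothesis on $\|\cdot\|_F$ (which controls $\|R_z(H)VR_z(H)\|_1$ in terms of $\|V\|_F$) together with the preceding lemma are designed to ensure. Once these verifications are in place, the rest is a symmetric algebraic manipulation essentially following \cite{AS2}, adjusted to accommodate the weaker hypothesis that only $\chi(H)V\chi(H)\varphi(H)$ — rather than $V\varphi(H)$ itself — is trace class.
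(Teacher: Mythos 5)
Your proposal is correct, and it rests on exactly the same key fact as the paper's proof -- the closedness of the 1-form $\Phi(\varphi),$ i.e.\ the symmetry of $(V,W)\mapsto D_W\Phi_H(V)(\varphi),$ verified through the Helffer--Sj\"ostrand formula \eqref{F: H-S DOI} and cyclicity of the trace -- but you deploy it differently. You establish path-independence of the line integral by a homotopy (Poincar\'e-lemma) argument and then read off exactness; the paper instead takes the primitive $\theta^\varphi_H$ defined only along straight lines from a base point, differentiates it directly in the direction $V$ (the same symmetry identity is what converts the awkward term into $\int_0^1 r\,\tfrac{d}{dr}\Phi_{H_r}(V)(\varphi)\,dr$), and finishes with an integration by parts in $r.$ The two routes are logically equivalent and defer the same trace-class technicalities to the same devices ($\|\cdot\|_F,$ the preceding lemma, and \eqref{F: H-S DOI}); yours has the mild advantage of giving path-independence for arbitrary $C^1$ paths in one stroke, which is the form of the conclusion the paper actually invokes afterwards. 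One refinement to your closedness verification: after regularising to $\Tr\left(\chi(H)V\chi(H)\varphi(H)\right),$ the terms in which a derivative falls on a cutoff factor $\chi(H_{r,s})$ do not match the term in the same position on the other side of the identity; rather the ``left-$\chi$'' term of one side pairs with the ``right-$\chi$'' term of the other, and the matching uses not only cyclicity but also that $\varphi(H),$ $\chi(H),$ and $R_z(H)$ mutually commute and that $\chi\varphi=\varphi.$ With that bookkeeping made explicit, your sketch is at the same level of rigour as the paper's.
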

\begin{proof}[Sketch]
For any test function $\varphi,$ let $\theta_H^\varphi$ denote the integral of $\Phi(\varphi)$ along the line from some fixed $H_0$ to $H.$ 
The aim is to show $d\theta^\varphi_H(V) = \Phi_H(V)(\varphi).$
Beginning with the left hand side and letting $W = H - H_0$ and $H_r = H_0 + rW,$
\begin{align*}
 d\theta^\varphi_H(V) &= \frac{d}{ds}\theta^\varphi_{H+sV}\Big|_{s=0} = \lim_{s\to 0} \int_0^1 \frac{1}{s} \big( \Phi_{H_r+rsV}(W+sV)(\varphi) - \Phi_{H_r}(W)(\varphi) \big)\,dr
\\& = \lim_{s\to 0} \int_0^1 \left(\Phi_{H_r+rsV}(V)(\varphi) + \frac{1}{s}\big(\Phi_{H_r+rsV}(W)(\varphi) - \Phi_{H_r}(W)(\varphi)\big) \right)\,dr
\\& = \int_0^1 \Phi_{H_r}(V)(\varphi) \,dr + \int_0^1 \frac{d}{ds}\Phi_{H_r+rsV}(W)(\varphi)\Big|_{s=0}\,dr.
\end{align*}
The last line can be justified using the theory of double operator integrals, or \eqref{F: H-S DOI}, which can also be used to show the equalities
\begin{align*}
 \frac{d}{ds}\Phi_{H_r+rsV}(W)(\varphi)\Big|_{s=0} &= \Tr\left(\frac{d}{ds}\varphi(H_r+rsV)\Big|_{s=0}W\right)
\\&= r\Tr\left(\frac{d}{dr}\varphi(H_r)V\right)
= r\frac{d}{dr}\Phi_{H_r}(V)(\varphi).
\end{align*}
Therefore, we have
\begin{equation*}
 d\theta^\varphi_H(V) = \int_0^1 \Phi_{H_r}(V)(\varphi)\,dr + \int_0^1 r\frac{d}{dr}\Phi_{H_r}(V)(\varphi)\,dr.
\end{equation*}
Integrating the second term by parts gives the result.
\end{proof}

The {\em spectral shift measure} $\xi(\varphi) = \xi(\varphi;H_1,H_0),$ defined as the integral of $\Phi(\varphi)$ by the Birman-Solomyak formula \eqref{F: B-S formula}, therefore does not depend on the piecewise $C^1$ path from $H_0$ to $H_1.$

\begin{prop}
Let $H_r$ be a $C^1$ path in $\clA$ and let $\varphi$ be a test function. 
Then, for the path $r\mapsto\varphi(H_r),$ the chain rule holds under the trace in the sense that
\begin{equation}\label{F: chain rule}
\Tr\left(\frac{d\varphi(H_r)}{dr}f(H_r)\right) = \Tr\left(E_{H_r}(\mathrm{supp}\varphi)\dot{H}_r\varphi'(H_r)f(H_r)\right),
\end{equation}
for any bounded Borel function $f.$
\end{prop}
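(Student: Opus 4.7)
The plan is to reduce both sides of \eqref{F: chain rule} to resolvent integrals via the Helffer-Sj\"ostrand formula and to exploit the commutativity of $R_z(H_r)$ with any Borel function of $H_r.$ First I would right-multiply \eqref{F: H-S DOI} by $f(H_r),$ take the trace, and use Fubini to bring the trace inside the $dx\,dy$ integral; this is justified by the standard estimate $\bar\partial\tilde\varphi(z)=O(|y|^N)$ together with the assumed $\clL_1$-bound on $R_z(H_r)\dot H_rR_z(H_r).$ Under the trace, cyclicity and the fact that $f(H_r)$ commutes with $R_z(H_r)$ give
\[
 \Tr\big(R_z(H_r)\dot H_rR_z(H_r)f(H_r)\big) = \Tr\big(\dot H_rf(H_r)R_z(H_r)^2\big).
\]

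Next I would use the identity $R_z(H_r)^2 = \partial_z R_z(H_r)$ and integrate by parts in $z;$ no boundary term arises because $\tilde\varphi$ has compact support in $x.$ Since $\partial_z$ and $\bar\partial$ commute and $\partial_z\tilde\varphi$ is itself an almost analytic extension of $\varphi',$ a second application of the Helffer-Sj\"ostrand formula yields
\[
 -\frac{1}{\pi}\int_{\mbR^2}\bar\partial\tilde\varphi(z) R_z(H_r)^2\,dx\,dy = \varphi'(H_r),
\]
so that the left hand side of \eqref{F: chain rule} reduces to $\Tr\big(\dot H_rf(H_r)\varphi'(H_r)\big).$ To conclude, I would insert $E_{H_r}(\mathrm{supp}\,\varphi)$ next to $\varphi'(H_r)$ (permissible since $\mathrm{supp}\,\varphi'\subseteq\mathrm{supp}\,\varphi$) and then apply cyclicity of the trace together with the mutual commutativity of $E_{H_r}(\mathrm{supp}\,\varphi),$ $\varphi'(H_r),$ and $f(H_r)$ to rearrange the operators into the order claimed.

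The main obstacle I anticipate is bookkeeping trace-class membership at each rearrangement: one must verify that $\dot H_rf(H_r)R_z(H_r)^2$ and $E_{H_r}(\mathrm{supp}\,\varphi)\dot H_r\varphi'(H_r)f(H_r)$ belong to $\clL_1(\hilb),$ which in each case follows by commuting the bounded, $H_r$-commuting factors past a single $R_z(H_r)$ to land on the standing hypothesis $R_z(H_r)\dot H_rR_z(H_r)\in\clL_1(\hilb).$ No deeper machinery beyond two applications of Helffer-Sj\"ostrand and one integration by parts appears to be needed.
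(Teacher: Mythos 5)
Your overall strategy is exactly the one the paper intends (it says the proposition ``can be proved e.g.\ with the aid of \eqref{F: H-S DOI}''), and the analytic core --- integrating by parts in $z$ and recognising $\partial_z\tilde\varphi$ as an almost analytic extension of $\varphi'$ --- is sound. The gap is in the trace-class bookkeeping, and it is not the routine verification you describe. Your cyclic permutations pass through the one-sided products $\dot H_rf(H_r)R_z(H_r)^2$ and $\dot H_r\varphi'(H_r)f(H_r)$, and neither is known to be trace class in this setting: the standing hypothesis controls only the \emph{two-sided} sandwich $R_z(H_r)\dot H_rR_z(H_r)$, and the paper explicitly warns that $V\varphi(H)$ may fail to be trace class even though $E_H(\mathrm{supp}\varphi)V\varphi(H)$ is --- that is precisely why the projection appears in the statement you are proving. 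Your proposed repair, ``commuting the bounded, $H_r$-commuting factors past a single $R_z(H_r)$,'' cannot land on $R_z(H_r)\dot H_rR_z(H_r)$, because to do so you would have to move a resolvent past $\dot H_r$ itself, with which it does not commute; $\dot H_rf(H_r)R_z^2=\dot H_rR_z\cdot f(H_r)R_z$ has the form (possibly non-trace-class)$\times$(bounded). Since $\Tr(AB)=\Tr(BA)$ requires both products to lie in $\clL_1,$ several links in your chain of equalities are traces of operators that may not be trace class.

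The fix is to never break the sandwich. For instance, in the setting of Section \ref{S: sSSF} write $\dot H_r=F^*J_rF$ and use cyclicity only between Hilbert--Schmidt factors:
\begin{equation*}
\Tr\bigl(R_z(H_r)\dot H_rR_z(H_r)f(H_r)\bigr)=\Tr\bigl(\dot J_r\,FR_z(H_r)f(H_r)R_z(H_r)F^*\bigr)=\Tr\bigl(\dot J_r\,Fh_z(H_r)F^*\bigr),
\end{equation*}
with $h_z(\lambda)=f(\lambda)(\lambda-z)^{-2},$ both factors $FR_z$ and $f(H_r)R_zF^*$ being Hilbert--Schmidt. Integrating against $-\pi^{-1}\bar\partial\tilde\varphi(z)\,dx\,dy$ and using your scalar identity inside the functional calculus gives $\Tr\bigl(\dot J_rF(f\varphi')(H_r)F^*\bigr),$ which by the paper's stated equality of traces of $J F(\varphi\psi)(H)F^*$ and $\psi(H)V\varphi(H)$ (with $\psi=1_{\mathrm{supp}\varphi}$) is the right-hand side of \eqref{F: chain rule}. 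Your final rearrangements then become unnecessary. In the abstract setting of Section \ref{S: SSF} the same point must be handled by a double operator integral argument, but in no case may one detour through $\dot H_r\varphi'(H_r)f(H_r)$ without the projection attached.
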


This proposition can be proved e.g. with the aid of \eqref{F: H-S DOI}. 
Two important corollaries follow, namely the trace formula and the invariance principle.
These are respectively obtained by integrating \eqref{F: chain rule} with $f = 1$ and $f = f\circ\varphi.$ 

\begin{cor}
The spectral shift measure satisfies the formula
\begin{equation}\label{F: trace formula}
 \Tr(\varphi(H_1) - \varphi(H_1)) = \xi(\varphi';H_1,H_0),
\end{equation}
for any $H_0,H_1\in\clA$ and test function $\varphi.$
\end{cor}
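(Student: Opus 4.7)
The plan is to derive the trace formula by integrating the chain rule \eqref{F: chain rule} along the linear path $H_r = H_0 + rV$, $r \in [0,1]$, where $V := H_1 - H_0$. This is a $C^1$ path in $\clA$ with constant derivative $\dot H_r = V$, and by the preceding lemma $r\mapsto\varphi(H_r)$ is a $C^1$ path in $\varphi(H_0) + \clL_1(\hilb)$ for any test function $\varphi$.

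Applying \eqref{F: chain rule} with $f = 1$ gives, for each $r \in [0,1]$,
\[
  \Tr\left(\frac{d\varphi(H_r)}{dr}\right) = \Tr\left(E_{H_r}(\mathrm{supp}\varphi)\,V\,\varphi'(H_r)\right).
\]
Since $\mathrm{supp}\varphi' \subseteq \mathrm{supp}\varphi$, the projection $E_{H_r}(\mathrm{supp}\varphi)$ acts as the identity on the range of $\varphi'(H_r)$ and may therefore be replaced by $E_{H_r}(\mathrm{supp}\varphi')$ without affecting the trace. Integrating over $r \in [0,1]$, the $C^1$ regularity in the trace norm permits the trace to be interchanged with the $r$-integral on the left, producing $\Tr(\varphi(H_1) - \varphi(H_0))$ via the fundamental theorem of calculus in $\clL_1(\hilb)$. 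The right-hand side, with the replacement noted above, is precisely the Birman-Solomyak integral \eqref{F: B-S formula} applied to the test function $\varphi'$ along the chosen path, i.e.\ $\xi(\varphi';H_1,H_0)$. This yields the claimed trace formula (the statement as printed contains a typographical $\varphi(H_1)-\varphi(H_1)$ on the left in place of $\varphi(H_1)-\varphi(H_0)$).

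The only point requiring any care is the interchange of trace and $r$-integral, which is immediate from the continuity of the trace on $\clL_1(\hilb)$ once one invokes the regularity lemma; since path-independence of $\xi$ was already established, there is also no issue in singling out the linear interpolation. I do not foresee any substantive obstacle beyond bookkeeping.
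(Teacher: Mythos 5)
Your proposal is correct and follows exactly the route the paper indicates: the corollary is obtained by integrating the chain rule \eqref{F: chain rule} with $f=1$ along the (path-independent) linear interpolation, interchanging trace and integral via the $C^1$ regularity in $\clL_1(\hilb)$, and identifying the right-hand side with the Birman--Solomyak integral for $\varphi'$. Your extra remarks --- the harmless replacement of $E_{H_r}(\mathrm{supp}\varphi)$ by $E_{H_r}(\mathrm{supp}\varphi')$ and the typographical $\varphi(H_1)-\varphi(H_1)$ in the statement --- are both accurate.
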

Combining \eqref{F: trace formula} with Krein's classical result \eqref{F: Kreins trace formula} implies that in the case of trace class perturbations, the spectral shift measure is absolutely continuous and its density coincides with the classical SSF.

\begin{cor}\label{C: invariance principle}
Let $H_0,H_1\in\clA,$ let $\varphi$ be a real-valued test function, and let $\xi$ and $\xi_\varphi$ be the spectral shift measures of the pairs $H_1,H_0$ and $\varphi(H_1),\varphi(H_0)$ respectively.
Then 
$
 \xi_\varphi(f) = \xi(f\circ\varphi\cdot\varphi')
$
for any bounded Borel $f.$
\end{cor}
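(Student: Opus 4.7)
The plan is to exploit the chain rule formula \eqref{F: chain rule} by substituting $f\circ\varphi$ for $f$ in it, and then to reinterpret both resulting sides using the Birman-Solomyak formula: the left-hand side as $\xi_\varphi(f)$ computed along the curved path $r \mapsto \varphi(H_r)$, and the right-hand side as $\xi(\varphi' \cdot f \circ \varphi;H_1,H_0)$ computed along the straight-line path $H_r$ from $H_0$ to $H_1$.

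First I would observe that, by the lemma stated early in this section, the path $G_r := \varphi(H_r)$ is $C^1$ in $\varphi(H_0)+\clL_1(\clH)$, and in particular $\varphi(H_0)$ and $\varphi(H_1)$ are resolvent comparable. Path-independence of the spectral shift measure then lets me write
\[
  \xi_\varphi(f) = \int_0^1 \Tr\bigl(E_{G_r}(\mathrm{supp}\,f)\,\dot G_r\,f(G_r)\bigr)\,dr.
\]
Using the composition rule $(f\circ\varphi)(H_r)=f(G_r)$ of the Borel functional calculus, the absorption identity $E_{G_r}(\mathrm{supp}\,f)\,f(G_r)=f(G_r)$, and cyclicity of the trace, the integrand simplifies to $\Tr\bigl(\dot G_r\,(f\circ\varphi)(H_r)\bigr)$. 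Applying the chain rule \eqref{F: chain rule} with $f\circ\varphi$ in the role of $f$ converts this into
\[
  \Tr\bigl(E_{H_r}(\mathrm{supp}\,\varphi)\,\dot H_r\,\varphi'(H_r)(f\circ\varphi)(H_r)\bigr).
\]
Since $\mathrm{supp}(\varphi'\cdot f\circ\varphi)\subseteq\mathrm{supp}\,\varphi'\subseteq\mathrm{supp}\,\varphi$, the projection $E_{H_r}(\mathrm{supp}\,\varphi)$ can be replaced by $E_{H_r}(\mathrm{supp}(\varphi'\cdot f\circ\varphi))$ without changing the trace, and the $r$-integral is then precisely the Birman-Solomyak expression for $\xi(\varphi'\cdot f\circ\varphi;H_1,H_0)$, yielding the identity.

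The main technical obstacle I expect is that the Birman-Solomyak formula is stated for continuous compactly supported integrands, whereas $\varphi'\cdot f\circ\varphi$ is only bounded Borel with compact support. I would handle this by approximation: choose a uniformly bounded sequence $f_n\in C_c(\mbR)$ converging pointwise to $f$ on the range of $\varphi$, so that $(f_n\circ\varphi)(H_r)\to(f\circ\varphi)(H_r)$ strongly while remaining uniformly bounded, and then pass to the limit under the trace using the trace-class factor coming from $\varphi'(H_r)\dot H_r$ on one side and $f_n(G_r)E_{G_r}(\mathrm{supp}\,f_n)\,\dot G_r$ on the other, together with dominated convergence for the outer $r$-integral. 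The same approximation simultaneously extends the chain rule \eqref{F: chain rule} from continuous to bounded Borel $f$, closing the argument.
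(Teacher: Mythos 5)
Your proposal is correct and follows essentially the same route as the paper, which obtains the corollary precisely by substituting $f\circ\varphi$ into the chain rule \eqref{F: chain rule}, integrating over $r,$ and invoking path-independence to recognise the left-hand side as $\xi_\varphi(f)$ along the path $r\mapsto\varphi(H_r).$ Your extra care with the measure-theoretic extension from $C_c(\mbR)$ to bounded Borel integrands is a reasonable elaboration of details the paper leaves implicit (note the chain rule as stated already allows bounded Borel $f,$ so no separate extension of \eqref{F: chain rule} is needed).
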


We conclude this section with a proof of the absolute continuity of the spectral shift measure, consisting in its reduction to the trace class case.
\begin{prop}
The spectral shift measure is absolutely continuous.
\end{prop}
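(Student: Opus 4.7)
The plan is to reduce absolute continuity to the trace class case---already covered by Krein's classical theorem---using the invariance principle (Corollary~\ref{C: invariance principle}) to change variables via a suitable test function $g$.

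First, I would fix an arbitrary bounded interval $(a,b) \subset \mbR$ and choose a real-valued $g \in C_c^\infty(\mbR)$ that is strictly increasing on a neighborhood of $[a,b]$ with $g' \geq c > 0$ there. The one non-routine step is verifying that $g(H_1) - g(H_0) \in \clL_1(\hilb)$. I would obtain this from the Helffer-Sj\"ostrand representation
\[
 g(H_1) - g(H_0) = \frac{1}{\pi}\int_{\mbR^2} \bar\partial\tilde g(z)\big(R_z(H_1)-R_z(H_0)\big)\, dx\, dy,
\]
whose integrand is trace-class valued by resolvent comparability of $H_0$ and $H_1$, and whose $\clL_1$-norm is absolutely integrable thanks to the vanishing to all orders of $\bar\partial\tilde g$ on the real axis.

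With this trace class membership in hand, I would invoke Krein's classical theorem \eqref{F: Kreins trace formula} together with the trace formula \eqref{F: trace formula} to conclude that the spectral shift measure $\xi_g := \xi(\,\cdot\,;g(H_1),g(H_0))$ is absolutely continuous, with some density $\rho_g \in L^1(\mbR)$.

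To transfer this back to $\xi(\,\cdot\,;H_1,H_0)$, I would use the invariance principle. For $\psi \in C_c((a,b))$, set $f(y) = \psi(g^{-1}(y))/g'(g^{-1}(y))$ on $g((a,b))$ and zero elsewhere; since $g$ restricts to a diffeomorphism of $(a,b)$ onto its image with $g'$ bounded below, $f$ is a bounded Borel function and $\psi = (f\circ g)\cdot g'$. Corollary~\ref{C: invariance principle} then gives $\xi(\psi) = \xi_g(f)$, and the standard change of variables $y = g(x)$ converts the absolute continuity of $\xi_g$ into $\xi(\psi) = \int_a^b \psi(x)\rho_g(g(x))\, dx$. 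Hence $\xi$ is absolutely continuous on $(a,b)$ with local density $\rho_g\circ g$; covering $\mbR$ by a countable family of such intervals finishes the proof. The main obstacle is the Helffer-Sj\"ostrand trace class estimate above; once it is established, the remainder is a mechanical combination of Krein's theorem, the invariance principle, and a change of variables.
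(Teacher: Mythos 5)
Your overall strategy --- reducing to Krein's trace class result by applying the invariance principle to a test function --- is the same as the paper's, and the Helffer--Sj\"ostrand step you single out as the main obstacle is in fact already available in the paper's framework (it is the content of the Lemma in Section 2, and the paper explicitly records that $\xi_\varphi$ is absolutely continuous for any test function $\varphi$). The genuine gap lies elsewhere: the identity $\psi = (f\circ g)\cdot g'$ is false. Because $g\in C_c^\infty(\mbR)$ is strictly increasing on $[a,b]$, it must return to zero outside $[a,b]$, and on that descent it re-attains every nonzero value in $g((a,b))$. At such a point $x\notin (a,b)$ one has $g(x)\in g((a,b))$, so $f(g(x))$ is generally nonzero, while $g'(x)$ cannot vanish on the whole descent; hence $(f\circ g)(x)\,g'(x)\neq 0=\psi(x)$. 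Consequently the invariance principle gives
$\xi_g(f)=\xi\bigl((f\circ g)\cdot g'\bigr)=\xi(\psi)+(\text{an extra term supported where }g\text{ is not injective}),$
and the absolute continuity of $\xi_g$ does not transfer to $\xi$ on $(a,b)$ unless that extra term is controlled. No choice of compactly supported $g$ avoids this, since such a $g$ can never be injective on the set where it is nonzero.

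This is precisely the difficulty the paper's proof is built around. Its trapezium-shaped $\varphi_{a,b}$ has slopes $\pm 1$, so for a Lebesgue-null set $E$ one gets $1_E\circ\varphi_{a,b}\cdot\varphi_{a,b}'=1_{a+E}-1_{b-E}$: the unavoidable contribution from the descending side appears explicitly as $-\mu(b-E)$, where $\mu$ is the singular part of $\xi$. The paper then disposes of it by varying $b$ to show that $\mu$ is translation invariant, hence a multiple of Lebesgue measure, hence zero. To repair your argument you would need either this symmetrization-and-translation trick or some other device for eliminating the contribution of the non-injective part of $g$; as written, the change-of-variables step does not go through.
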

\begin{proof}
Let $\mu$ be the singular part of the spectral shift measure. 
We will show that $\mu$ is translation invariant.
It therefore must be a multiple of Lebesgue measure, leaving $\mu=0$ as the only possibility.
Fix $0<\varepsilon<<1$ and let $E$ be a Borel subset of $[\varepsilon,1-\varepsilon]$ with zero Lebesgue measure.
To demonstrate translation invariance it is enough to show that $\mu(E) = \mu(a+E)$ for any real $a.$
For any $a,b\in\mbR$ with $b-a>2,$ consider a test function $\varphi_{a,b}$ whose graph looks like a smoothed isosceles trapezium with height 1 which is stretched over the interval $[a,b]$ and has slopes of the sides equal to $\pm 1.$
More precisely $\varphi_{a,b}$ is subject to the constraints: $\varphi_{a,b}(\lambda) = \lambda - a$ on $[a+\varepsilon,a+1-\varepsilon],$ $\varphi_{a,b}(\lambda) = b - \lambda$ on $[b-1+\varepsilon,b-\varepsilon],$ and except as already specified $\varphi_{a,b}$ does not take values in $[\varepsilon,1-\varepsilon].$
Let $1_A$ denote the indicator of $A.$
By construction, $1_E\circ \varphi_{a,b}\cdot\varphi'_{a,b} = 1_{a+E} - 1_{b-E}.$
Then by Corollary \ref{C: invariance principle},
\begin{equation*}
\xi_{\varphi_{a,b}}(E) = \xi(a+E) - \xi(b-E) = \mu(a+E) - \mu(b-E).
\end{equation*}
The left hand side is zero, since $\xi_{\varphi_{a,b}}$ is absolutely continuous by Krein's result for trace class perturbations.
Hence choosing $b$ such that $b>2$ and $b-a>2,$ we conclude that $\mu(E) = \mu(b-E) = \mu(a+E).$
\end{proof}

\section{Singular SSF}\label{S: sSSF}
Let $H_0$ be an arbitrary self-adjoint operator on a Hilbert space $\clH$ (complex, separable, with inner product linear in the second argument) and let $F\colon\clH\to\clK$ be a rigging operator (a bounded operator with trivial kernel and cokernel) which is {\em $H_0$-Hilbert-Schmidt,} by which we mean that the operator $FR_z(H_0)$ is Hilbert-Schmidt for some, hence any, nonreal $z.$
Put $\clA_0(F) := F^*\clB_{sa}(\clK)F,$ where $\clB_{sa}(\clK)$ denotes the bounded self-adjoint operators on $\clK,$ and equip it with the norm $\|F^*JF\|_F := \|J\|$ making it a real Banach space isomorphic to $\clB_{sa}(\clK).$
Consider the affine space $\clA(F) := H_0 + \clA_0(F).$

We now list some properties which follow from the definition of $\clA(F).$ 
By the Kato-Rellich theorem all operators from $\clA(F)$ are self-adjoint and share a common domain. 
By Weyl's theorem all operators share a common essential spectrum $\sigma_{ess}.$
The rigging operator $F$ is $H$-Hilbert-Schmidt for any $H\in\clA(F)$ and any two operators from $\clA(F)$ are resolvent comparable.
For each $H\in\clA(F)$ the set $\Lambda(H,F;\clL_1)$ is a full set, in particular the Limiting Absorption Principle holds. 
For any $V=F^*JF \in\clA_0(F),$ any $H\in\clA(F),$ and any two Borel functions $\varphi,\psi$ dominated by $\mathrm{const.}(1+x^2)^{-1/2},$ the operators $JF(\varphi \psi)(H)F^*$ and $\psi(H)V\varphi(H)$ belong to the trace class, having equal traces.
The function $\clA(F)\ni H \mapsto FR_z(H)\in \clL_2(\clH,\clK)$ is Lipschitz continuous.

From Section \ref{S: SSF}, the infinitesimal spectral shift measure $\Phi$ given by \eqref{F: ISSM defn} is exact on $\clA(F).$
Moreover, its integral \eqref{F: B-S formula} over any piecewise $C^1$ path from $H_0$ to $H_1$ is an absolutely continuous measure whose density is the SSF $\xi(\lambda;H_1,H_0).$
In this setting, the SSF belongs to $L_1(\mbR,(1+x^2)^{-1}dx).$ 
Its Poisson integral, which we will call the {\em smoothed SSF,} is given by
\begin{align}
 \xi(z;H_1,H_0) &:= \xi(\pi^{-1}\Im R_z;H_1,H_0) \nonumber
\\&= \frac{1}{\pi}\int_0^1 y\Tr\left(R_{\bar{z}}(H_r)\dot{H}_rR_z(H_r)\right) \,dr, \label{F: smoothed SSF}
\end{align}
where $y = \Im z$ and $H_r$ is a piecewise $C^1$ path in $\clA(F).$
By a well known result on the convergence of Poisson integrals (see e.g. \cite[Theorem 2.5.4]{Sim}), 
\begin{equation}\label{F: smoothed SSF converges to SSF}
\xi(\lambda;H_1,H_0) = \lim_{y\to 0^+}\xi(\lambda+iy;H_1,H_0) \quad \text{ for a.e. } \lambda\in\mbR.
\end{equation}

We now consider the Lebesgue decomposition $\Phi = \Phi^{(a)} + \Phi^{(s)}$ of the infinitesimal spectral shift measure.
For $H\in\clA(F)$ and $V\in\clA_0(F),$ the absolutely continuous and singular parts are respectively given by replacing $(\cdot)$ with $(a)$ and $(s)$ in the formula
\[
 \Phi^{(\cdot)}_H(V) = \Tr\left(E^{(\cdot)}_H(\mathrm{supp}\varphi)V\varphi(H)\right), \quad \varphi\in C_c(\mbR).
\]
\begin{lemma}\label{L: a.c.ISSM}
The absolutely continuous part of the infinitesimal spectral shift measure has the representation:
\begin{equation}\label{F: a.c.ISSM}
 \Phi^{(a)}_H(V)(\varphi) = \frac{1}{\pi}\int_\mbR\varphi(\lambda)\lim_{y\to 0^+}y\Tr(R_{\lambda-iy}(H)VR_{\lambda+iy}(H))\,d\lambda, \quad \phi\in C_c(\mbR).
\end{equation}
\end{lemma}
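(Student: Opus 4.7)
The plan is to identify the right-hand side of \eqref{F: a.c.ISSM} as an integral against $\varphi$ of the Fatou boundary value of the Poisson integral of the signed Radon measure $\Phi_H(V),$ and then to invoke the classical theorem equating this boundary value with the Radon-Nikodym density of the absolutely continuous part.

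The algebraic heart of the proof is the identity
\[
 \frac{y}{\pi}\Tr\bigl(R_{\lambda-iy}(H)\,V\,R_{\lambda+iy}(H)\bigr) \;=\; \Tr\bigl(V\,P_{\lambda+iy}(H)\bigr),
\]
where $P_{\lambda+iy}(t) := \tfrac{1}{\pi}\tfrac{y}{(t-\lambda)^2+y^2}$ denotes the Poisson kernel. This is immediate from the functional-calculus identity $P_{\lambda+iy}(H) = \tfrac{y}{\pi}R_{\lambda+iy}(H)R_{\lambda-iy}(H) = \tfrac{1}{\pi}\Im R_{\lambda+iy}(H)$ together with cyclicity of the trace. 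All traces here are well-defined: $\sqrt{P_{\lambda+iy}}$ is dominated by $\mathrm{const.}(1+t^2)^{-1/2},$ so by the property of $\clA_0(F)$ listed above, $\sqrt{P_{\lambda+iy}}(H)\,V\,\sqrt{P_{\lambda+iy}}(H)\in\clL_1(\hilb),$ and one defines $\Tr(VP_{\lambda+iy}(H))$ via this factorisation.

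Next I would show that $\Tr(VP_{\lambda+iy}(H))$ is precisely the Poisson integral $\int P_{\lambda+iy}\,d\Phi_H(V).$ Approximating $P_{\lambda+iy}$ by a monotone sequence $\chi_n\in C_c(\mbR)$ with $0\le \chi_n\nearrow P_{\lambda+iy}$ and using dominated convergence in $\clL_1(\hilb)$ with majorant $\sqrt{P_{\lambda+iy}}(H)|V|\sqrt{P_{\lambda+iy}}(H),$ one obtains $\Phi_H(V)(\chi_n) = \Tr(V\chi_n(H))\to\Tr(VP_{\lambda+iy}(H)),$ while monotone convergence on the measure side gives $\Phi_H(V)(\chi_n)\to\int P_{\lambda+iy}\,d\Phi_H(V).$ The classical Fatou-type theorem for Poisson integrals of locally finite signed Borel measures (\cite[Theorem 2.5.4]{Sim}, applied to each part of the Jordan decomposition) then gives, for Lebesgue-a.e. $\lambda,$
\[
 \lim_{y\to 0^+}\frac{y}{\pi}\Tr\bigl(R_{\lambda-iy}(H)\,V\,R_{\lambda+iy}(H)\bigr) = \frac{d\mu_a}{d\lambda}(\lambda),
\]
where $\mu_a$ is the Lebesgue-absolutely-continuous part of $\Phi_H(V).$ Integrating against $\varphi\in C_c(\mbR)$ reduces the lemma to the identification $\mu_a = \Phi^{(a)}_H(V),$ which follows from the fact that, in the spectral Lebesgue decomposition $E_H = E^{(a)}_H + E^{(s)}_H,$ every scalar measure $\scal{f,E^{(a)}_H(\cdot)f}$ is absolutely continuous while every $\scal{f,E^{(s)}_H(\cdot)f}$ is Lebesgue-singular.

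The main obstacle is this passage from compactly supported test functions to the non-compactly supported Poisson kernel and the associated interchange of trace and limit in $\clL_1(\hilb);$ all remaining ingredients reduce to the functional calculus and the listed trace-class properties of $\clA_0(F).$
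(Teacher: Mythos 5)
Your argument is correct, and it rests on the same two analytic pillars as the paper's proof---the identity rewriting $\tfrac{y}{\pi}\Tr(R_{\lambda-iy}(H)VR_{\lambda+iy}(H))$ in terms of $\Im R_{\lambda+iy}(H)$ (equivalently the Poisson kernel of $H$), and the a.e.\ convergence of Poisson integrals of measures to the density of their absolutely continuous part (\cite[Theorem 2.5.4]{Sim})---but it deploys them at a different level. The paper first reduces to $J,\varphi\geq 0$, expands the trace over an orthonormal basis $\{\psi_j\}$ of $\clK$ so that Tonelli justifies interchanging $\sum_j$ with $\int d\lambda$, recognises each term $\pi^{-1}\scal{F^*J\psi_j,\Im R_{\lambda+iy}(H)F^*\psi_j}$ as the Poisson integral of the scalar measure $\scal{F^*J\psi_j,EF^*\psi_j}$, and resums to get $\Tr(E^{(a)}_H(\mathrm{supp}\varphi)V\varphi(H))$ directly. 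You instead apply the Fatou-type theorem once, to the trace-level measure $\Phi_H(V)$ itself; this is cleaner but shifts the burden to your final step, where you must know that the Lebesgue decomposition of $\Phi_H(V)$ coincides with the spectral one, i.e.\ that $\varphi\mapsto\Tr(E^{(s)}_H(\mathrm{supp}\varphi)V\varphi(H))$ is Lebesgue-singular. The paper asserts exactly this in the sentence introducing $\Phi^{(a)}$ and $\Phi^{(s)}$, so you are entitled to it; but observe that proving it requires essentially the orthonormal-basis decomposition you avoided (each $\scal{F^*J\psi_j,E^{(s)}F^*\psi_j}$ is concentrated on a null set and a convergent countable sum of such measures is concentrated on the union), whereas the paper's term-by-term route yields the identification as a by-product. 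Two minor points of hygiene: in the approximation $\chi_n\nearrow P_{\lambda+iy}$ it is cleaner to split $J=J_+-J_-$ and use monotone convergence of traces of positive operators than to invoke ``dominated convergence in $\clL_1$'' with majorant $\sqrt{P_{\lambda+iy}}(H)\,|V|\,\sqrt{P_{\lambda+iy}}(H)$ (note $|V|\neq F^*|J|F$ in general); and the finiteness of $\int(1+t^2)^{-1}d|\Phi_H(V)|$, needed to apply Simon's theorem, should be recorded---it follows from $FR_z(H)$ being Hilbert--Schmidt.
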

\begin{proof}
Let $V = F^*JF.$ 
The trace in the integrand on the right of \eqref{F: a.c.ISSM}, being equal to $\Tr(J\Im T_{\lambda+iy}(H))$ since $\Im R_{\lambda+iy} = y R_{\lambda+iy}R_{\lambda-iy},$ converges to $\Tr(J\Im T_{\lambda+i0}(H))$ for all $\lambda$ from the full set $\Lambda(H,F;\clL_1).$
Without loss of generality both $J$ and $\varphi$ can be assumed to be positive, in which case the integrand is also positive.
Choosing an orthonormal basis $\{\psi_j\}$ of the auxiliary Hilbert space $\clK,$ the right hand side of \eqref{F: a.c.ISSM} is equal to
\[
\frac{1}{\pi}\int_\mbR\sum_{j=1}^\infty \varphi(\lambda)\scal{\psi_j,J\Im T_{\lambda+i0}(H)\psi_j}\,d\lambda.
\]
Interchanging the sum and integral, we consider the terms of the resulting sum:
\[
 \int_{\mbR}\varphi(\lambda)\lim_{y\to 0^+}\frac{1}{\pi}\scal{F^*J\psi_j,\Im R_{\lambda+iy}(H)F^*\psi_j}\,d\lambda.
\]
The inner product here is the Poisson integral of the measure $\scal{F^*J\psi_j,E F^*\psi_j},$ where $E$ is the spectral measure of $H,$ and therefore converges a.e. to the density of its absolutely continuous part. 
Thus the terms of the sum can be rewritten as
$
\langle F^*J\psi_j,$ $\varphi(H)P^{(a)}(H)F^*\psi_j \rangle,$
where $P^{(a)}(H)$ is the projection onto the absolutely continuous subspace of $H.$ For any $\varphi$ from $C_c(\mbR)$ the operator $JF\varphi(H)P^{(a)}(H)F^*$ belongs to the trace class and summing over $j$ gives the left hand side of \eqref{F: a.c.ISSM}.
\end{proof}

\begin{thm}\label{T: a.c.SSF}
For $\varphi\in C_c(\mbR),$ the integral $\xi^{(a)}(\varphi;\{H_r\}):= \eqref{F: a.c.SSM}$ of $\Phi^{(a)}(\varphi)$ over a piecewise analytic path $H_r$ in $\clA(F),$ defines an absolutely continuous measure whose density $\xia\left(\lambda;\{H_r\}\right),$ the absolutely continuous SSF, for a.e. $\lambda\in\mbR$ satisfies the equality
\begin{equation}\label{F: a.c.SSF}
\xia\left(\lambda;\{H_r\}\right) = \frac{1}{\pi}\int_0^1 \lim_{y\to 0^+}y\Tr\left(R_{\lambda-iy}(H_r)\dot{H}_r R_{\lambda+iy}(H_r)\right)\,dr.
\end{equation}
\end{thm}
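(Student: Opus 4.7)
The plan is to write the right-hand side of \eqref{F: a.c.SSM} as a double integral over $[0,1]\times\mbR$ by inserting the representation of $\Phi^{(a)}$ from Lemma \ref{L: a.c.ISSM}, and then interchange the order of integration by Fubini to obtain
\begin{align*}
\xia(\varphi;\{H_r\})
&= \int_0^1\frac{1}{\pi}\int_\mbR \varphi(\lambda)\lim_{y\to 0^+}y\Tr\bigl(R_{\lambda-iy}(H_r)\dot H_rR_{\lambda+iy}(H_r)\bigr)\,d\lambda\,dr \\
&= \int_\mbR \varphi(\lambda)\left[\frac{1}{\pi}\int_0^1\lim_{y\to 0^+}y\Tr\bigl(R_{\lambda-iy}(H_r)\dot H_rR_{\lambda+iy}(H_r)\bigr)\,dr\right]d\lambda.
\end{align*}
The bracketed quantity is then the desired density $\xia(\lambda;\{H_r\})$; its local integrability against any $\varphi\in C_c(\mbR)$ delivers simultaneously the absolute continuity of the measure and the formula \eqref{F: a.c.SSF}.

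To justify the swap, I would first split $\varphi$ and $\dot H_r = F^*J_rF$ into positive and negative parts (the latter piecewise analytically in $r$) to reduce to the case $\varphi\ge 0$ and $J_r\ge 0$. In this case the inner integrand is non-negative, since on the full set $\Lambda(H_r,F;\clL_1)$ it equals $\pi^{-1}\Tr\bigl(J_r\Im T_{\lambda+i0}(H_r)\bigr)$, with $\Im T_{\lambda+i0}(H_r)$ a positive trace class operator. Joint measurability of this integrand in $(r,\lambda)$ follows from the joint continuity of $(r,\lambda,y)\mapsto y\Tr(R_{\lambda-iy}(H_r)\dot H_rR_{\lambda+iy}(H_r))$ on $\{y>0\}$, together with the fact that, by the analytic Fredholm alternative applied on each analytic piece of the path, for every $\lambda$ in a full subset of $\mbR$ the limit in $y$ exists in trace norm for all but a discrete set of $r\in[0,1]$. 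Tonelli then gives equality of the two iterated integrals, and the finiteness of the iterated $\lambda$-then-$r$ integral follows from
\[
\frac{1}{\pi}\int_\mbR\varphi(\lambda)\Tr\bigl(J_r\Im T_{\lambda+i0}(H_r)\bigr)\,d\lambda = \Phi^{(a)}_{H_r}(\dot H_r)(\varphi) \le \|J_r\|\,\|(1+x^2)\varphi\|_\infty\,\|FR_i(H_r)\|_2^2,
\]
obtained by bounding $\Phi^{(a)}_{H_r}\le\Phi_{H_r}$ (valid because both are positive measures in the reduced case) and factoring $F\varphi(H_r)F^* = FR_i(H_r)\cdot\psi(H_r)R_{-i}(H_r)F^*$ with $\psi(x)=(1+x^2)\varphi(x)$; this bound is uniform in $r\in[0,1]$ by the Lipschitz continuity of $r\mapsto FR_z(H_r)\in\clL_2(\clH,\clK)$ noted at the beginning of the section.

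The main obstacle is the joint measurability assertion, which depends crucially on the piecewise analyticity hypothesis on $\{H_r\}$: analyticity is precisely what the Fredholm alternative needs to constrain the exceptional $r$-set (the resonance points along the path) to a discrete set for each admissible $\lambda$, so that the pointwise limit is a well-defined measurable function of $(r,\lambda)$. Once this is secured, Fubini--Tonelli yields the interchange and hence both conclusions of the theorem at once.
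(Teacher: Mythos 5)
Your proposal follows essentially the same route as the paper: identify the integrand with $\pi^{-1}\Tr\big(\dot{J}_r\Im T_{\lambda+i0}(H_r)\big)$ on the full set $\Gamma=\{(\lambda,r):\lambda\in\Lambda(H_r,F;\clL_1)\}$ (minus the discrete resonance sets), establish a dominating bound via Lemma \ref{L: a.c.ISSM}, and interchange the integrals by Fubini--Tonelli. One caveat: your parenthetical claim that $\dot{J}_r$ splits into positive and negative parts \emph{piecewise analytically} in $r$ is false in general (the map $J\mapsto J_\pm$ is norm-continuous but not analytic when the spectrum meets $0$); however, this claim is not needed, since the resonance structure depends only on the unmodified path $H_r$, and the paper sidesteps the splitting altogether by using the pointwise domination $|\Tr(\dot{J}_r\Im T_{\lambda+i0}(H_r))|\leq \Tr\big(|\dot{J}_r|\,\Im T_{\lambda+i0}(H_r)\big)$ together with the bound $\Phi^{(a)}_{H_r}(F^*|\dot{J}_r|F)(\Delta)\leq \|\dot{J}_r\|\,\|FE_{H_r}(\Delta)\|_2^2$, which is locally bounded in $r$; your alternative bound via $\|FR_i(H_r)\|_2^2$ serves the same purpose.
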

\begin{proof}
Without loss of generality we can assume $H_r$ is analytic, so let $J_r$ be an analytic path in $\clB_{sa}(\clK)$ such that $H_r = H_0 + F^*J_rF.$
We consider the function defined by
\begin{equation}\label{F: Phia integrand}
 (\lambda,r) \mapsto \frac{1}{\pi}\Tr\left(\dot{J}_r\Im T_{\lambda+i0}(H_r)\right).
\end{equation}
This function is defined on the set $\Gamma := \{(\lambda,r) : \lambda\in\Lambda(H_r,F;\clL_1)\},$ which is a full set in the plane.
For any fixed $\lambda$ from the full set $\Lambda(\{H_r\},F;\clL_1),$ the pair $(\lambda,r)$ belongs to $\Gamma$ if and only if $r$ does not belong to the discrete resonance set $R(\lambda;\{H_r\}).$
For each such $r$ and thus a.e. $r\in[0,1]$ the integrand on the right hand side of \eqref{F: a.c.SSF} is equal to the value of \eqref{F: Phia integrand}.
We will now check that \eqref{F: Phia integrand} is integrable over a bounded rectangle $\Delta\times[0,1].$ 
For this it suffices to show that as a function of $\lambda$ on the bounded interval $\Delta$ it has locally bounded $L_1$-norm with respect to $r.$
\begin{align*}
 \frac{1}{\pi}\int_{\Delta}\left|\Tr\left(\dot{J}_r\Im T_{\lambda+i0}(H_r)\right)\right|\,d\lambda &\leq \frac{1}{\pi}\int_{\Delta}\Tr\left(\big|\dot{J}_r\big|\Im T_{\lambda+i0}(H_r)\right)\,d\lambda
\\ &= \Phi^{(a)}_{H_r}(F^*|\dot{J_r}|F)(\Delta)
\\ &\leq \big\|\dot{J}_r\big\|\big\|FE_{H_r}(\Delta)\big\|_2^2,
\end{align*}
where in the second line we have used Lemma \ref{L: a.c.ISSM} and in the third $E_{H_r}$ denotes the spectral measure of $H_r.$
It follows from our assumptions that the last expression is a locally bounded function of $r.$ 
Therefore, the following use of Fubini's theorem is justified. 
For any $\varphi\in C_c(\mbR),$ we have 
\begin{multline*}
\int_\mbR \varphi(\lambda)\int_0^1 \frac{1}{\pi}\Tr\left(\dot{J}_r\Im T_{\lambda+i0}(H_r)\right) dr\,d\lambda \\= \int_0^1 \int_\mbR \varphi(\lambda)\frac{1}{\pi}\Tr\left(\dot{J}_r\Im T_{\lambda+i0}(H_r)\right)d\lambda\,dr.
\end{multline*}
By Lemma \ref{L: a.c.ISSM} the right hand side is equal to
$
\int_0^1 \Phi^{(a)}_{H_r}(\dot{H}_r)(\varphi)\,dr = \xi^{(a)}(\varphi;\{H_r\}).
$
It follows that $\xi^{(a)}$ is absolutely continuous with density $\xi(\lambda;\{H_r\})$ equal a.e. to
$
\pi^{-1}\int_0^1 \Tr\big(\dot{J}_r\Im T_{\lambda+i0}(H_r)\big) dr,
$
which coincides with \eqref{F: Phia integrand} for any $\lambda$ from the full set $\Lambda(\{H_r\},F;\clL_1).$
\end{proof}

\begin{cor}\label{C: sSSF}
The integral \eqref{F: xis defn} of $\Phi^{(s)}$ over a piecewise analytic path $H_r$ in $\clA(F)$ defines an absolutely continuous measure whose density, the singular SSF, is the difference of the SSF and the absolutely continuous SSF:
\begin{equation*}
\xis(\lambda;\{H_r\}) = \xi(\lambda;H_1,H_0) - \xia(\lambda;\{H_r\}).
\end{equation*}
\end{cor}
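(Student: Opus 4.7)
The plan is to reduce the corollary to an additive Lebesgue decomposition of measures, exploiting the absolute continuity results already established for the total spectral shift measure $\xi$ and for $\xi^{(a)}$.

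First, I would verify the pointwise (in $H$) identity
\[
 \Phi_H(V)(\varphi) = \Phi_H^{(a)}(V)(\varphi) + \Phi_H^{(s)}(V)(\varphi)
\]
for every $H\in\clA(F)$, $V\in\clA_0(F)$, and $\varphi\in C_c(\mbR)$. Each of the operators $E_H(\mathrm{supp}\varphi)V\varphi(H)$, $E_H^{(a)}(\mathrm{supp}\varphi)V\varphi(H)$, and $E_H^{(s)}(\mathrm{supp}\varphi)V\varphi(H)$ is trace class: the first by the observations recorded in Section~\ref{S: sSSF}; the second because it equals $P^{(a)}(H)\cdot E_H(\mathrm{supp}\varphi)V\varphi(H)$ (using that $P^{(a)}(H)$ commutes with $E_H(\mathrm{supp}\varphi)$ and $\varphi(H)$); and the third as the difference. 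Taking traces and invoking $E_H = E_H^{(a)} + E_H^{(s)}$ then gives the displayed identity.

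Second, I would integrate this identity along the piecewise analytic path $H_r$ with $V$ replaced by $\dot{H}_r$, for $r\in[0,1]$. A piecewise analytic path is in particular piecewise $C^1$, so by Section~\ref{S: SSF} the integral of $\Phi(\varphi)$ along $H_r$ is the (path-independent, absolutely continuous) spectral shift measure $\xi(\varphi;H_1,H_0)$. By definition, the integrals of $\Phi^{(a)}(\varphi)$ and $\Phi^{(s)}(\varphi)$ along $H_r$ are $\xi^{(a)}(\varphi;\{H_r\})$ and $\xi^{(s)}(\varphi;\{H_r\})$ respectively. The interchange of the $r$-integration with the Lebesgue decomposition is unproblematic: both the total and the absolutely continuous pieces have already been shown to be locally finite measures (in Section~\ref{S: SSF} and in the proof of Theorem~\ref{T: a.c.SSF}, where Fubini's theorem was justified by a direct integrability estimate), so the singular piece inherits the same status as their difference. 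One therefore has the identity of measures
\[
 \xi(\cdot;H_1,H_0) = \xi^{(a)}(\cdot;\{H_r\}) + \xi^{(s)}(\cdot;\{H_r\}).
\]

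Third, I would conclude by subtraction: since both $\xi$ and $\xi^{(a)}$ are absolutely continuous (by Section~\ref{S: SSF} and Theorem~\ref{T: a.c.SSF} respectively), so is their difference $\xi^{(s)}$, and equating Radon-Nikodym densities yields exactly the claimed formula. I do not anticipate any real obstacle; the substantive analytic content sits inside Theorem~\ref{T: a.c.SSF}, and what remains for this corollary is the clean bookkeeping of an additive Lebesgue decomposition.
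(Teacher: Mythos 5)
Your proposal is correct and coincides with the argument the paper intends (the corollary is stated without an explicit proof precisely because it follows by integrating the Lebesgue decomposition $\Phi=\Phi^{(a)}+\Phi^{(s)}$ along the path and subtracting, using the absolute continuity of $\xi$ from Section~\ref{S: SSF} and of $\xi^{(a)}$ from Theorem~\ref{T: a.c.SSF}). Nothing further is needed.
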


\section{Singular SSF and total resonance index}
In this section we will restrict our attention to a straight line $H_r = H_0 +rV,$ with $V = F^*JF,$ connecting two self-adjoint operators $H_0$ and $H_1$ in the affine space $\clA(F).$
For straight paths we usually modify notation which indicates dependence on a piecewise analytic path $H_r$ by instead writing the pair of endpoints $H_1,H_0,$ or a point and direction $H_0,V$ if the endpoints are not important, e.g. $\xi^{(s)}(\lambda;\{H_r\}) \leftrightarrow \xi^{(s)}(\lambda;H_1,H_0)$ and $R(\lambda;\{H_r\}) \leftrightarrow R(\lambda;H_0,V).$

For a fixed essentially regular point $\lambda$ of the path $H_r = H_0 + rV,$ let $r_\lambda\in R(\lambda;H_0,V)$ be a fixed resonance point, i.e. $r_\lambda$ is a real pole of the meromorphic function $r\mapsto T_{\lambda+i0}(H_r).$
By shifting the point $\lambda$ slightly to $\lambda+iy$ for small positive (or negative) $y,$ the pole $r_\lambda$ may in general split into finitely many poles $r^1_{\lambda+iy},\ldots,r^N_{\lambda+iy}$ of $r\mapsto T_{\lambda+iy}(H_r).$
These $N$ poles (counted with multiplicities), called resonance points of $\lambda+iy$ and collectively called the $r_\lambda$-group, are stable for small $y$ and none of them can be real.
Changing the sign of $y$ results in a reflection of the $r_\lambda$-group about the real axis and resonance points of $\bar{z}$ are called anti-resonance points of $z.$
For small positive $y,$ let $N_+$ be the number of resonance points of $\lambda+iy$ from the $r_\lambda$-group which lie in the upper half-plane $\mbC_+.$ 
Similarly let $N_-$ be the number of resonance points in the lower half-plane $\mbC_-,$ or equivalently the number of anti-resonance points in $\mbC_+.$
The {\em resonance index} $\mathrm{ind}_{res}(\lambda;H_{r_\lambda},V)$ is by definition the difference $N_+ - N_-.$

If $\lambda$ is outside of the essential spectrum $\sigma_{ess},$ $\mathrm{ind}_{res}(\lambda;H_{r_\lambda},V)$ counts the net number of eigenvalues of the path $H_r$ which cross the point $\lambda$ in the positive direction as $r$ crosses $r_\lambda,$ 
so the resonance index can be considered as an extension of this concept of infinitesimal spectral flow into $\sigma_{ess};$
see \cite{Az9} for more information.

If $\lambda\in\Lambda(H_s,F)$ (which must hold for some real $s$ by the assumption that $\lambda$ is essentially regular), then $T_{z}(H_r) = T_{z}(H_s)(1 + (r-s)JT_{z}(H_s))^{-1}$ for $z=\lambda+iy,$ $y\geq 0,$ and resonance points $r_{z}$ correspond to eigenvalues $\sigma_z(s) = (s-r_z)^{-1}$ of the compact operator $JT_{z}(H_s).$
The eigenvalue $\sigma_z(s)$ and the resonance point $r_z$ lie in the same half-plane so that $N_\pm$ is the number of eigenvalues $\sigma_{\lambda\pm iy}(s)$ in $\mbC_+,$ counting multiplicities, which converge to $\sigma_\lambda(s)$ as $y\to 0^+.$
A simple argument (see \cite[Proposition 3.2.3]{Az9}) shows that the Riesz projection onto the eigenspace of $JT_z(H_s)$ corresponding to the eigenvalue $\sigma_z(s)$ does not depend on $s$ and coincides with the residue of $r\mapsto JT_z(H_r)$ at $r_\lambda:$ 
\[
 \frac{1}{2\pi i}\oint_{C(\sigma_z(s))} (\sigma - JT_z(H_s))^{-1} \,d\sigma = \frac{1}{2\pi i}\oint_{C(r_z)} JT_z(H_r) \,dr.
\]
It follows that for small enough $y,$ we have
\[
N_\pm = \Tr\left(\frac{1}{2\pi i} \oint_{C_+(r_\lambda)}JT_{\lambda\pm iy}(H_r) \,dr \right),
\] 
where $C_+(r_\lambda)$ is the upper semicircle of a small circle around $r_\lambda$ which for $0<y<<1$ encloses only the resonance points of the $r_\lambda$-group (see the picture below).
Hence the resonance index has the representation:
\begin{equation}\label{F: res ind}
 \mathrm{ind}_{res}(\lambda;H_{r_\lambda},V) = \Tr\left(\frac{1}{\pi} \oint_{C_+(r_\lambda)} J\Im T_{\lambda+iy}(H_r)\,dr\right).
\end{equation}
In the current context of $\clA(F),$ the function $r\mapsto \pi^{-1}J\Im T_{\lambda+iy}(H_r)$ is a meromorphic trace class valued function so that the trace and integral in \eqref{F: res ind} can be interchanged.

Let $[a,b]$ be an interval containing $r_\lambda$ and no other resonance points of the path $H_r$ and let $L$ be a contour in $\mbC$ from $a$ to $b$ which for small $y$ circumvents resonance and anti-resonance points of the $r_\lambda$-group in $\mbC_+,$ as shown in the picture below.

\begin{center}
\begin{tikzpicture}
\draw[fill] (4.6,0.4) circle (0.05);
\draw[dotted] (5,0.05) to[out=90,in=-10] (4.65,0.4);
\draw[fill] (5.6,0.7) circle (0.05);
\draw[dotted] (5,0.05) to[out=90,in=190] (5.55,0.7);
\draw (4.8,1) circle (0.05);
\draw[dotted] (5,0.05) to[out=90,in=-40] (4.85,.95);

\draw (4.6,-0.4) circle (0.05);
\draw[dotted] (5,0.05) to[out=-90,in=10] (4.65,-0.4);
\draw (5.6,-0.7) circle (0.05);
\draw[dotted] (5,0.05) to[out=-90,in=-190] (5.55,-0.7);
\draw[fill] (4.8,-1) circle (0.05);
\draw[dotted] (5,0.05) to[out=-90,in=40] (4.85,-.95);
	
\node [above] at (2,0) {$L$};
\draw[->] (-1,0) -- (-0.05,0);
\draw[>->] (0,0) -- (2,0);
\draw (2,0) -- (3.4,0);
\draw[->] (3.4,0) arc (180:45:1.6);
\draw (6.6,0) arc (0:45:1.6);
\draw (6.6,0) -- (8,0);
\draw[->] (8,0) -- (9,0);
\draw[>-] (9.05,0) -- (10,0);

\node [below] at (6,0) {$C_+(r_\lambda)$};
\draw[->] (3.5,0) -- (6,0);
\draw (6,0) -- (6.5,0);
\draw[->] (6.5,0) arc (0:135:1.5);
\draw (3.5,0) arc (180:135:1.5);

\draw[fill] (-1,0) circle (0.025);
\node [below] at (-1,0) {$a$};
\draw[fill] (10,0) circle (0.025);
\node [below] at (10,0) {$b$};
\draw[fill] (5,0) circle (0.025);
\node [below,fill=white,opacity=1] at (5,0) {$r_\lambda$};
\end{tikzpicture}
\end{center}
The semicircle in this picture may be tiny in comparison to the interval $[a,b].$
Note that the resonance point $r_\lambda$ is a pole of the meromorphic function $r\mapsto \pi^{-1}J\Im T_{\lambda+i0}(H_r),$ which has no other poles in the vicinity of $C_+(r_\lambda).$

The integral over $[a,b]$ of the meromorphic function $r \mapsto \pi^{-1}J\Im T_{\lambda+iy}(H_r),$ $0<y<<1,$ can be decomposed into the sum of its integrals over $L$ and $C_+(r_\lambda).$
After taking the trace and using \eqref{F: res ind}, for small $y$ we obtain
\begin{multline}\label{F: L2 = L1 + P}
\frac{1}{\pi}\int_a^b \Tr( J\Im T_{\lambda+iy}(H_r) ) \,dr 
\\= \frac{1}{\pi}\int_L \Tr( J\Im T_{\lambda+iy}(H_r) ) \,dr + \mathrm{ind}_{res}(\lambda;H_{r_\lambda},V).
\end{multline}
The left hand side of \eqref{F: L2 = L1 + P} is the smoothed SSF $\xi(\lambda+iy;H_b,H_a)$ which by \eqref{F: smoothed SSF converges to SSF} converges for a.e. $\lambda$ to the SSF $\xi(\lambda;H_b,H_a)$ as $y\to 0^+.$
Whereas we will show that the first term on the right hand side of \eqref{F: L2 = L1 + P} converges a.e. to the absolutely continuous SSF $\xi^{(a)}(\lambda;H_b,H_a).$
Firstly, if $\lambda\in\Lambda(\{H_r\},F;\clL_1),$
\[
 \lim_{y\to 0^+}\frac{1}{\pi}\int_L \Tr( J\Im T_{\lambda+iy}(H_r) ) \,dr = \frac{1}{\pi}\int_L \Tr( J\Im T_{\lambda+i0}(H_r) ) \,dr,
\]
since on the contour $L$ the integrand on the left converges uniformly to the integrand on the right as $y\to 0^+.$
Secondly, in the last integral we can replace the contour $L$ by the interval $[a,b],$ because the function $r\mapsto \pi^{-1}\Tr(J\Im T_{\lambda+i0}(H_r))$ has no poles within $C_+(r_\lambda)$ and moreover it admits analytic continuation to the real axis --
a fact that we shall delay proving until it conveniently follows from Theorem \ref{T: S'(r)} below.
By \eqref{F: a.c.SSF}, the result for a.e. $\lambda$ is $\xi^{(a)}(\lambda;H_b,H_a).$

Therefore, for a.e. $\lambda$ from the full set $\Lambda(\{H_r\},F;\clL_1)$ we have
\[
 \xi(\lambda;H_a,H_b) = \xi^{(a)}(\lambda;H_a,H_b) + \mathrm{ind}_{res}(\lambda;H_{r_\lambda},V),
\]
for any interval $[a,b]$ containing a single resonance point $r_\lambda.$
Using additivity of the singular SSF along the path $H_r$ the above argument proves
\begin{thm}\label{T: sSSF = res ind}
Let $H_0$ and $H_1$ be two self-adjoint operators from $\clA(F)$ and let $H_r = H_0 +rV.$ 
For a.e. $\lambda$ from the full set $\Lambda(H_0,F;\clL_1)\cap\Lambda(H_1,F;\clL_1),$
\[
 \xi^{(s)}(\lambda;H_1,H_0) = \sum_{r_\lambda \in [0,1]}\mathrm{ind}_{res}(\lambda;H_{r_\lambda},V),
\]
where the sum, namely the total resonance index, is taken over the finite number of resonance points $r_\lambda$ from the interval $[0,1].$
\end{thm}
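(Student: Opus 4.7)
The approach is to reduce the general statement to the case of an interval containing a single resonance point, where identity \eqref{F: L2 = L1 + P} already encodes the required decomposition, and then sum.

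First I would fix $\lambda\in\Lambda(H_0,F;\clL_1)\cap\Lambda(H_1,F;\clL_1);$ by the discussion preceding the theorem, $\lambda$ is essentially regular and the resonance set $R(\lambda;H_0,V)\cap[0,1]$ is finite. Choose a partition $0=s_0<s_1<\cdots<s_n=1$ of $[0,1]$ such that each subinterval $[s_{k-1},s_k]$ contains at most one resonance point $r_\lambda^{(k)}.$ The singular spectral shift measure is additive along concatenated paths (by its very definition \eqref{F: xis defn} as an integral over $r$), and the same additivity holds for the SSF $\xi$ and the absolutely continuous SSF $\xi^{(a)}$ from \eqref{F: a.c.SSM}. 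Thus it is enough to prove, for each single-resonance subinterval $[a,b]=[s_{k-1},s_k],$ that $\xi^{(s)}(\lambda;H_b,H_a)=\mathrm{ind}_{res}(\lambda;H_{r_\lambda},V),$ interpreting the right-hand side as zero if no resonance point lies in $[a,b].$

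On such a subinterval, apply \eqref{F: L2 = L1 + P} for small $y>0.$ The left-hand side is precisely the smoothed SSF $\xi(\lambda+iy;H_b,H_a)$ of \eqref{F: smoothed SSF}, and by \eqref{F: smoothed SSF converges to SSF} it converges as $y\to 0^+$ to $\xi(\lambda;H_b,H_a)$ for a.e.\ $\lambda.$ For the integral along the contour $L,$ the integrand converges \emph{uniformly on $L$} to $\pi^{-1}\Tr(J\Im T_{\lambda+i0}(H_r))$ because $L$ avoids a neighbourhood of $r_\lambda$ in the lower half of the plane where the limit exists in trace norm. Next I would deform $L$ back to the interval $[a,b];$ this is legitimate provided $r\mapsto \pi^{-1}\Tr(J\Im T_{\lambda+i0}(H_r))$ admits a meromorphic extension across the real axis within $C_+(r_\lambda)$ whose only pole there is $r_\lambda$ itself (which has already been accounted for by the resonance-index term). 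After this deformation, Theorem \ref{T: a.c.SSF} identifies the resulting real integral with $\xi^{(a)}(\lambda;H_b,H_a)$ for a.e.\ $\lambda.$ Passing to the limit in \eqref{F: L2 = L1 + P} therefore yields
\[
\xi(\lambda;H_b,H_a) = \xi^{(a)}(\lambda;H_b,H_a)+\mathrm{ind}_{res}(\lambda;H_{r_\lambda},V),
\]
whence Corollary \ref{C: sSSF} gives $\xi^{(s)}(\lambda;H_b,H_a)=\mathrm{ind}_{res}(\lambda;H_{r_\lambda},V).$ Summing over $k=1,\ldots,n$ completes the proof.

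The main obstacle is precisely the analytic-continuation step acknowledged in the paragraph preceding the theorem: one must know that $r\mapsto\Tr(J\Im T_{\lambda+i0}(H_r))$ continues analytically across $(a,b)\setminus\{r_\lambda\}$ in a neighbourhood of the semicircle $C_+(r_\lambda),$ in order to replace the contour $L$ by $[a,b].$ The author defers this to Theorem \ref{T: S'(r)}; modulo that fact, everything else is either uniform convergence on compact sets avoiding the pole, or a direct application of Theorem \ref{T: a.c.SSF} and the Poisson-integral convergence \eqref{F: smoothed SSF converges to SSF}. A minor secondary point is ensuring the full sets intersect to a full set (which they do since countable intersections of full sets are full), so the exceptional $\lambda$ arising from each subinterval can be combined into one exceptional null set.
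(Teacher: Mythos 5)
Your plan reproduces the paper's own proof: partition $[0,1]$ into single-resonance subintervals, apply the decomposition \eqref{F: L2 = L1 + P}, pass to the limit $y\to 0^+$ on the left (the smoothed SSF, via \eqref{F: smoothed SSF converges to SSF}) and uniformly on the contour $L,$ straighten $L$ to $[a,b],$ identify the result with $\xi^{(a)}$ via Theorem \ref{T: a.c.SSF}, and sum using path-additivity. One correction to your stated justification of the contour-straightening step: it is not enough that $r\mapsto\pi^{-1}\Tr(J\Im T_{\lambda+i0}(H_r))$ be meromorphic near $C_+(r_\lambda)$ with a pole at $r_\lambda$ that is ``already accounted for by the resonance-index term.'' If that trace had a genuine pole at the real point $r_\lambda,$ the integral over $[a,b]$ would diverge and could not be identified with $\xi^{(a)}(\lambda;H_b,H_a).$ What the deferred fact from Theorem \ref{T: S'(r)} actually supplies --- and what the argument needs --- is that this trace, a priori defined only at non-resonant $r,$ extends \emph{analytically} across the whole real axis, including $r_\lambda;$ the resonance-index term records only the poles of the $y>0$ function (the $r_\lambda$-group in $\mbC_+$) enclosed by $C_+(r_\lambda),$ not any residue of the limit function. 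With that reading, your argument coincides with the paper's.
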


\section{Singular SSF and singular \texorpdfstring{$\mu$}{mu}-invariant}\label{S: sSSF and mu inv}
A preliminary aim of this section is to establish the ordered exponential representation of the scattering matrix:
\begin{equation}\label{F: S = Texp}
S(\lambda;H_1,H_0) = \Texp\left(-2 i \int_0^1 w_+(\lambda;H_0,H_r)(\ldots)w_+(\lambda;H_r,H_0)\, dr \right),
\end{equation}
where the ingredients of this formula are: $H_r = H_0 + F^*J_rF$ is a piecewise analytic path in the affine space $\clA(F),$ 
the brackets $(\ldots)$ stand for the operator $\sqrt{\Im T_{\lambda+i0}(H_r)}\dot{J}_r\sqrt{\Im T_{\lambda+i0}(H_r)},$
and $w_+(\lambda;H_0,H_r)\colon\hlambda(H_r)\to\hlambda(H_0)$ is a wave matrix, where $\hlambda(H_r)$ is the fibre of a direct integral of Hilbert spaces $\int^{\oplus}\hlambda(H_r)\,d\lambda$ which is isomorphic to the absolutely continuous subspace $\clH^{(a)}(H_r)$ and on which the absolutely continuous part of $H_r$ acts as multiplication by $\lambda.$
The ordered exponential \eqref{F: S = Texp} is the unique solution to the ordinary differential equation
\begin{equation}\label{F: S ODE}
\frac{d}{dr}S(\lambda;H_r,H_0) = -2 i w_+(\lambda;H_0,H_r)(\ldots)w_+(\lambda;H_r,H_0)S(\lambda;H_r,H_0),
\end{equation}
where again $(\ldots)=\sqrt{\Im T_{\lambda+i0}(H_r)}\dot{J}_r\!\sqrt{\Im T_{\lambda+i0}(H_r)}.$
Further information about the ordered exponential can be found in the appendix to \cite{Az3v6}.

Proving \eqref{F: S ODE} requires objects of stationary scattering theory such as the scattering matrix $S(\lambda;H_r,H_0),$ for fixed $\lambda,$ to be well defined for a continuous family of operators $H_r.$ 
Classical approaches (e.g. \cite{Ya,BE}) define such objects via arbitrary direct integral decompositions, so that we cannot ask questions about $S(\lambda;H_r,H_0)$ for a fixed $\lambda.$
In \cite{KK}, T. Kato and S.\,T. Kuroda construct the wave matrices $w_\pm(\lambda;H_r,H_0)$ for a full set of values $\lambda,$ but it remains unclear how to investigate their dependence on $r.$
An approach to stationary scattering theory which allows a proof of \eqref{F: S = Texp} is given in \cite{Az3v6}, for the case that the perturbation $H_1-H_0$ is trace class.
It is constructive in the sense that the wave matrices $w_\pm(\lambda;H_r,H_0),$ scattering matrix $S(\lambda;H_0,H_r),$ and other such objects are introduced by explicit formulas for every value of the spectral parameter $\lambda$ from a pre-defined full set, namely the set $\Lambda(H_0,F)\cap\Lambda(H_r,F).$ 
In the following subsection, this approach is generalised and simplified.
Further discussion about this approach can be found in the introduction to \cite{Az9}.

\subsection{Stationary scattering theory}\label{S: SST}
Let $H$ be a self-adjoint operator on $\clH$ and let $F\colon\clH\to\clK$ be an arbitrary rigging operator;
for this subsection we no longer require $F$ to be $H$-Hilbert-Schmidt.
Because the rigging operator will be fixed, dependence on it is often omitted from notation.

For $y>0,$ the range of the operator $\sqrt{\Im T_{\lambda+iy}(H)}$ is dense in the auxiliary Hilbert space $\clK.$
However for $\lambda\in\Lambda(H,F),$ the range of $\sqrt{\Im T_{\lambda+i0}(H)}$ may no longer be dense and we denote its closure by $\hlambda(H)=\hlambda(H,F).$
The field of fibre Hilbert spaces $\{\hlambda(H) : \lambda\in\Lambda(H,F)\}$ is measurable, in the sense that the orthogonal projections onto $\hlambda$ are weakly measurable, and hence defines a direct integral of Hilbert spaces 
\begin{equation*}
	\euH(H) := \int^\oplus_{\Lambda(H,F)}\hlambda(H)\,d\lambda.
\end{equation*}
This Hilbert space is the closed subspace of $L_2(\Lambda(H,F),\clK)$ consisting of those functions $f\colon \Lambda(H,F) \to \clK$ such that $f(\lambda)\in\hlambda(H)$ for a.e. $\lambda\in\Lambda(H,F).$

For any $\lambda\in\Lambda(H,F),$ let the {\em evaluation operator} $\euE_\lambda(H) = \euE_\lambda(H,F)$ be defined on the range of $F^*$ by 
\begin{equation}\label{F: eval opr defn}
 \euE_\lambda(H) = \sqrt{\pi^{-1}\Im T_{\lambda+i0}(H)}(F^*)^{-1}.
\end{equation}
Then for any $f$ from $\rng F^*,$ the function $\lambda\mapsto \euE_\lambda(H)f$ belongs to $\euH(H).$
Moreover, the measurable family of operators $\{\euE_\lambda(H) : \lambda\in\Lambda(H,F)\}$ defines a bounded operator $\euE(H) = \euE(H,F)$ from $\rng F^*$ to $\euH(H)$ with norm $\leq 1,$ which by the density of the range of $F^*$ in $\clH$ extends to a bounded operator
$
\euE(H)\colon \clH \to \euH(H)
$
whose norm is $\leq 1.$
Indeed,
\begin{align}
 \|\euE(H,F)f\|_{\euH(H)}^2 & = \frac{1}{\pi}\int_{\Lambda(H,F)}\lim_{y\to 0^+}\scal{f,\Im R_{\lambda+iy}(H)f}\,d\lambda \nonumber
\\ & = \|E(\Lambda(H,F))f\|^2. \label{F: Ef integrable}
\end{align}
For $f\in\rng F^*,$ the first equality here is a consequence of the definition \eqref{F: eval opr defn}, while the second, in which $E$ denotes the spectral measure of $H,$ holds due to properties of the Poisson integral (see e.g. \cite[Theorem 2.5.4]{Sim}): the measure $\scal{f,Ef}$ is purely absolutely continuous on the set $\Lambda(H,F),$ since its Poisson integral $\pi^{-1}\scal{f,\Im R_{\lambda+iy}(H)f}$ has finite limits there, and its density is a.e. equal to the limit of its Poisson integral.

The spectral measure $E$ of $H$ is purely absolutely continuous on set $\Lambda(H,F).$ 
If this were not true, there would exist a null set $X\subset\Lambda(H,F)$ for which $E(X)\neq 0$ and hence $\scal{f,E(X)f}\neq 0$ for some $f\in\rng F^*,$ which since $\scal{f,Ef}$ is absolutely continuous on $\Lambda(H,F)$ is impossible.
Therefore the second equality of \eqref{F: Ef integrable} holds for any $f\in\clH.$
It follows that the first equality also holds for any $f\in\clH.$

\begin{thm}\label{T: spectral thm}
Let $H$ be a self-adjoint operator on a Hilbert space $\clH,$ let $E$ be its spectral measure, and let $F\colon\clH\to\clK$ be a rigging operator. 
The operator $\euE(H)\colon\clH\to\euH(H)$ defined above is a partial isometry with initial space $E(\Lambda(H,F))\clH$ and final space $\euH(H).$
Moreover, $\euE(H)$ diagonalises the operator $E(\Lambda(H,F))H$ in the sense that for all $f\in\dom H$
\begin{equation}\label{F: H diagonalised}
 (\euE(H)Hf)(\lambda) = \lambda(\euE(H)f)(\lambda) \quad \forall \text{ a.e. } \lambda\in\Lambda(H,F)
\end{equation}
and if $h$ is a bounded Borel function whose minimal support is a subset of $\Lambda(H,F),$ then for all $f\in\clH$
\begin{equation}\label{F: h(H) diagonalised}
 (\euE(H)h(H)f)(\lambda) = h(\lambda)(\euE(H)f)(\lambda) \quad \forall \text{ a.e. } \lambda\in\Lambda(H,F).
\end{equation}
\end{thm}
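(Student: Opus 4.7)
The plan is to bootstrap the four claims from a resolvent diagonalization on the dense subset $\rng F^*$; the main obstacle will be upgrading a natural polarization argument---which on its own only gives this diagonalization modulo $(\rng\euE(H))^\perp$---to an exact identity in $\euH(H)$, which I expect to accomplish via a norm comparison.

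The identity $\|\euE(H)f\|^2_{\euH(H)} = \|E(\Lambda(H,F))f\|^2$ for all $f\in\clH$ was established in \eqref{F: Ef integrable} and the subsequent paragraph; polarizing gives $\euE(H)^*\euE(H) = E(\Lambda(H,F))$, so $\euE(H)$ is a partial isometry with initial space $E(\Lambda(H,F))\clH$ and closed range.

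For $f = F^*g \in \rng F^*$ and nonreal $z$, I would prove the resolvent diagonalization $\euE(H)R_z(H)f = (\lambda-z)^{-1}\euE(H)f$ in $\euH(H)$. Testing against $\euE(H)F^*g'$ for $g' \in \clK$, both sides yield the same pairing: the left via the partial isometry identity $\scal{\euE(H)f',\euE(H)R_z(H)f}_{\euH(H)} = \int_\Lambda(\lambda-z)^{-1}\,d\mu_{F^*g',F^*g}(\lambda)$, the right directly as $\int_\Lambda(\lambda-z)^{-1}\pi^{-1}\scal{g',\Im T_{\lambda+i0}(H)g}\,d\lambda$, the two agreeing by the Poisson identification $d\mu_{F^*g',F^*g}/d\lambda|_\Lambda = \pi^{-1}\scal{g',\Im T_{\lambda+i0}(H)g}$ from the preceding discussion. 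Since $\euE(H)(\rng F^*)$ is dense in $\rng\euE(H)$, the difference $\euE(H)R_z(H)f - (\lambda-z)^{-1}\euE(H)f$ lies in $(\rng\euE(H))^\perp$. To show this difference vanishes, I would compute
\[
 \|(\lambda-z)^{-1}\euE(H)f\|^2_{\euH(H)} = \int_\Lambda|\lambda-z|^{-2}\,d\mu_{F^*g}(\lambda) = \|R_z(H)E(\Lambda)F^*g\|^2 = \|\euE(H)R_z(H)f\|^2_{\euH(H)},
\]
using the partial isometry and the commutation of $R_z(H)$ with $E(\Lambda)$ in the last equality. The orthogonal decomposition then forces the $(\rng\euE(H))^\perp$-component to have zero norm.

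Surjectivity follows: since $(\lambda-z)^{-1}\sqrt{\pi^{-1}\Im T_{\lambda+i0}(H)}\,g \in \rng\euE(H)$ for each $g\in\clK$ and nonreal $z$, a Stieltjes inversion argument shows that these vectors span a dense subspace of $\euH(H)$. Indeed, any $\phi\in\euH(H)$ orthogonal to all of them has, for each $g\in\clK$, vanishing Stieltjes transform of the $L^1$-density $\lambda\mapsto\scal{\phi(\lambda),\sqrt{\pi^{-1}\Im T_{\lambda+i0}(H)}\,g}$, so this density is zero a.e.; varying $g$ over a countable dense subset of $\clK$ and using the constraint $\phi(\lambda)\in\hlambda(H) = \overline{\rng\sqrt{\Im T_{\lambda+i0}(H)}}$ forces $\phi=0$, whence $\rng\euE(H) = \euH(H)$. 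Identity \eqref{F: H diagonalised} then follows by writing $f=R_z(H)(H-z)f$ for $f\in\dom H$, extending the resolvent diagonalization to arbitrary $f\in\clH$ by continuity, and rearranging to obtain $\euE(H)Hf = \lambda\,\euE(H)f$; \eqref{F: h(H) diagonalised} follows from \eqref{F: H diagonalised} via the Borel functional calculus, e.g.\ by approximating $h$ uniformly on $\sigma(H)$ by rational functions with poles off the real axis.
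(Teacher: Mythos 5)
Your proposal is correct, but it takes a genuinely different route from the paper's. The paper's key lemma is a localization of the isometry identity: approximating $f$ by $f_n\in\rng F^*$ and using $\int_\Delta\|\euE_\lambda(H)f_n\|^2\,d\lambda=\|E(\Delta)f_n\|^2$, it shows that $E(\Delta)f=0$ forces $(\euE(H)f)(\lambda)=0$ for a.e.\ $\lambda\in\Delta$; this gives \eqref{F: h(H) diagonalised} first for indicators of Borel subsets of $\Lambda(H,F)$, then for step functions and all bounded Borel $h$ by continuity, with \eqref{F: H diagonalised} as a corollary, and surjectivity follows because the range of $\euE(H)$ is invariant under multiplication by bounded Borel functions while the vectors $\sqrt{\Im T_{\lambda+i0}(H)}\varphi$ span $\hlambda(H)$. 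You instead take the resolvent intertwining $\euE(H)R_z(H)=(\lambda-z)^{-1}\euE(H)$ as the pivot, upgrading the weak (pairing) identity to an exact one by the equal-norms/Pythagoras argument that kills the component in $(\rng\euE(H))^\perp$ --- a clean substitute for the paper's localization --- and you obtain surjectivity by Stieltjes--Poisson inversion of the $L_1$ densities $\lambda\mapsto\scal{\euE_\lambda(H)F^*g,\phi(\lambda)}$. The paper's route delivers the Borel statement \eqref{F: h(H) diagonalised} almost for free once the indicator case is settled; yours produces the resolvent intertwining directly and keeps surjectivity independent of the functional calculus. Two small repairs are needed. First, your weak pairing uses the off-diagonal density identification $d\scal{F^*g',E(\cdot)F^*g}/d\lambda=\pi^{-1}\scal{g',\Im T_{\lambda+i0}(H)g}$ on $\Lambda(H,F)$; the paper states only the diagonal case, so you should note that polarization supplies it. Second, uniform approximation by rational functions with nonreal poles only reaches continuous $h$ (vanishing at infinity), not all bounded Borel $h$; to finish, observe that the class of $h$ satisfying \eqref{F: h(H) diagonalised} contains $C_0(\mbR)$, is linear, and is closed under uniformly bounded pointwise limits (apply dominated convergence to the weak identity $\scal{\euE(H)f',\euE(H)h(H)f}=\int_{\Lambda(H,F)} h\,d\scal{f',E f}$ and to its counterpart in $\euH(H)$), hence contains all bounded Borel functions. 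With these routine additions the argument is complete.
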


Here, $(\euE(H)f)(\lambda)$ denotes the value of a function from the equivalence class $\euE(H)f$ of functions equal a.e. 
If $f\in\rng F^*,$ then a representative from this equivalence class is determined by the evaluation operator: $\lambda\mapsto\euE_\lambda(H)f.$

If the Limiting Absorption Principle holds for $H,$ then $\mbR\setminus\Lambda(H,F)$ is a core of the singular spectrum of $H$ and $E(\Lambda(H,F))$ is equal to the projection onto the absolutely continuous subspace $\clH^{(a)}(H).$
In this case $\euE(H)$ diagonalises the absolutely continuous part of $H.$

\begin{proof}
The equality \eqref{F: Ef integrable} implies that $\euE(H)$ is a partial isometry with initial space $E(\Lambda(H,F))\clH.$

We will now show, for any Borel subset $\Delta\subset\Lambda(H,F),$ that if $E(\Delta)f = 0$ then $(\euE(H)f)(\lambda) = 0$ for a.e. $\lambda\in\Delta.$
Supposing $E(\Delta)f = 0,$ let $f_n$ be a sequence from the range of $F^*$ converging to $f.$
Then 
\begin{align*}
\int_{\Delta}\|(\euE(H)f)(\lambda) - \euE_\lambda(H)f_n\|^2\,d\lambda &= \int_{\Delta}\|(\euE(H)(f-f_n))(\lambda)\|^2 \,d\lambda
\\&\leq \int_{\Lambda(H,F)}\|(\euE(H)(f-f_n))(\lambda)\|^2 \,d\lambda
\\&= \|\euE(H)(f-f_n)\|^2 
\\&\leq \|f-f_n\|^2 \to 0.
\end{align*}
Moreover, since $f_n\in\rng F^*,$ for any $n$ we have 
\[
\int_{\Delta}\|\euE_\lambda(H)f_n\|^2\,d\lambda = \frac{1}{\pi}\int_\Delta \lim_{y\to 0^+}\scal{f_n,\Im R_{\lambda+iy}f_n}\,d\lambda = \|E(\Delta)f_n\|^2.
\]
Therefore, since $E(\Delta)f = 0,$ in the limit the above equality becomes
\[
 \int_{\Delta}\|(\euE(H)f)(\lambda)\|^2\,d\lambda = 0,
\]
which implies that $(\euE(H)f)(\lambda) = 0$ for a.e. $\lambda\in\Delta.$

Let $\Delta$ be a Borel subset of $\Lambda(H,F)$ and let $f\in\clH.$ 
From above it follows that $(\euE(H)E(\Delta)f)(\lambda) = 0$ for a.e. $\lambda\notin\Delta$ 
and also that $(\euE(H)E(\Delta)f)(\lambda) = (\euE(H)f)(\lambda)$ for a.e. $\lambda\in\Delta.$ 
Therefore, the equality
$
 (\euE(H) E(\Delta) f)(\lambda) = \Delta(\lambda)(\euE(H) f)(\lambda)
$ 
holds for a.e. $\lambda \in \Lambda(H,F),$ where $\Delta(\lambda)$ denotes the indicator of $\Delta.$ 
This equality implies that \eqref{F: h(H) diagonalised} holds for step functions $h(\lambda),$
hence by continuity it holds for all bounded Borel functions $h.$
And \eqref{F: h(H) diagonalised} implies \eqref{F: H diagonalised}.

It remains to show that $\euH(H)$ is the final space, for which it is enough to show that the range of $\euE(H)$ is dense in $\euH(H).$
Let $g(\lambda)$ be an element of $\euH(H)$ which is orthogonal to the range of $\euE(H).$
The equality \eqref{F: h(H) diagonalised} implies that if the range of $\euE(H)$ contains a function $f(\lambda),$ then it also contains all functions of the form $h(\lambda)f(\lambda),$ where $h$ is a scalar-valued bounded Borel function.
Hence for any $f\in\clH,$ $g(\lambda)$ must be orthogonal to $h(\lambda)(\euE(H)f)(\lambda)$ for any bounded Borel $h.$
One infers that $g(\lambda) \perp (\euE(H)f)(\lambda)$ in $\hlambda(H)$ for a.e. $\lambda \in \Lambda(H,F).$
By considering $f=F^*\varphi$ from the range of $F^*,$ in which case $(\euE(H)F^*\varphi)(\lambda) = \sqrt{\Im T_{\lambda+i0}(H)}\varphi$ for a.e. $\lambda,$ it can be concluded that $g(\lambda)$ must be orthogonal to the whole fibre Hilbert space $\hlambda(H)$ and thus $g(\lambda) = 0$ for a.e. $\lambda \in \Lambda(H,F).$
\end{proof}

\smallskip
For any nonreal $z,$ consider the polar decomposition:
\begin{equation*}
 R_z(H)F^* = U_z(H,F)|R_z(H)F^*|.
\end{equation*}
Since $R_z(H)F^*$ has trivial kernel and cokernel, the operator $U_z(H,F)\colon\clK\to\clH$ is unitary. 
For any two self-adjoint operators $H_0$ and $H_1$ and for any nonreal $z,$ the operator $w(z;H_1,H_0)$ on the auxiliary Hilbert space $\clK$ defined by 
\begin{equation*}
 w(z;H_1,H_0) = U^*_z(H_1,F)U_z(H_0,F)
\end{equation*}
will be called an {\em off-axis wave matrix.}
Obviously the off-axis wave matrices are unitary, admit the multiplicative property $w(z;H_2,H_0) = w(z;H_2,H_1)w(z;H_1,H_0),$ and satisfy the equalities $w(z;H_0,H_0) = 1$ and $w^*(z;H_1,H_0) = w(z;H_0,H_1).$

Suppose $H_1 - H_0 = F^*JF$ for $J\in\clB_{sa}(\clK).$ Then the following equalities, in which $z=\lambda\pm iy,$ $y>0,$ can be obtained from the resolvent identities.
\begin{gather}
\sqrt{\Im T_{\lambda+iy}(H_1)} w(z;H_1,H_0)\sqrt{\Im T_{\lambda+iy}(H_0)} 
 = y FR_{\bar{z}}(H_1)R_z(H_0)F^* \label{F: a(z)}
\\ \sqrt{\Im T_{\lambda+iy}(H_1)}w(z;H_1,H_0) = (1 - T_{\bar{z}}(H_1)J)\sqrt{\Im T_{\lambda+iy}(H_0)} \label{F: Ew}
\\ w(z;H_1,H_0)\sqrt{\Im T_{\lambda+iy}(H_0)} = \sqrt{\Im T_{\lambda+iy}(H_1)}(1 + JT_z(H_0)) \label{F: wE}
\end{gather}
For example,
\begin{align*}
 w(z;H_1,H_0)\sqrt{\Im T_{\lambda+iy}(H_0)} &= U^*_z(H_1)U_z(H_0)\sqrt{yFR_{\bar{z}}(H_0)R_z(H_0)F^*}
\\&= \sqrt{y}\, U^*_z(H_1)R_z(H_0)F^*
\\&= \sqrt{y}\, U^*_z(H_1)R_z(H_1)F^*(1 + JT_z(H_0))
\\&= \sqrt{\Im T_{\lambda+iy}(H_1)}(1 + JT_z(H_0)).
\end{align*}

If $\lambda$ belongs to the intersection $\Lambda(H_0,F)\cap\Lambda(H_1,F)$ then each of the right hand sides of \eqref{F: wE}, \eqref{F: Ew}, and \eqref{F: a(z)} exists as a bounded operator on $\clK$ in the limit as $y\to 0^+.$ 
For convenience, with $z = \lambda \pm iy,$ $y>0,$ we put 
\[
 \euE_{z}(H) := \sqrt{\pi^{-1}\Im T_{\lambda+iy}(H)}(F^*)^{-1}.
\]
Then for any $\lambda\in\Lambda(H,F)$ and any $f\in\rng F^*,$ $\euE_z(H)f$ converges to $\euE_\lambda(H)f$ as $y\to 0^+.$
For $\Im z \neq 0,$ the density of $\rng \euE_{z}(H)$ in $\clK$ and \eqref{F: a(z)} imply that the off-axis wave matrices are determined by the numbers
\[
 \scal{\euE_{z}(H_1)f,w(z;H_1,H_0)\euE_{z}(H_0)g} = \frac{y}{\pi}\scal{R_{z}(H_1)f,R_{z}(H_0)g}, \quad \forall f,g\in\rng F^*.
\] 
\begin{prop}\label{P: wave matrix}
Suppose that $H_0$ and $H_1$ are self-adjoint operators such that $H_1 - H_0 \in F^*\clB_{sa}(\clK)F.$ 
Then for any $\lambda\in\Lambda(H_0,F)\cap\Lambda(H_1,F),$ there exist bounded operators
\begin{equation}\label{F: wave matrix}
 w_\pm(\lambda;H_1,H_0)\colon \hlambda(H_0,F) \to \hlambda(H_1,F),
\end{equation}
namely the wave matrices, which have norm $\leq 1$ and are uniquely determined by the numbers, for $f,g\in\rng F^*,$
\begin{equation}\label{F: wave matrix form}
 \scal{\euE_{\lambda}(H_1)f,w_\pm(\lambda;H_1,H_0)\euE_{\lambda}(H_0)g} := \lim_{y\to 0^+}\frac{y}{\pi}\scal{R_{\lambda\pm iy}(H_1)f,R_{\lambda\pm iy}(H_0)g}.
\end{equation}
Further, the wave matrices \eqref{F: wave matrix} satisfy analogous equalities to \eqref{F: a(z)}, \eqref{F: Ew}, and \eqref{F: wE}, in other words these equalities hold in the case that $y=0.$
\end{prop}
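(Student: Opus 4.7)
My plan is to construct $w_\pm(\lambda;H_1,H_0)$ as the $y\to 0^+$ limiting compression of the off-axis wave matrices $w(z;H_1,H_0)$ to the fibre subspaces $\hlambda(H_0)$ and $\hlambda(H_1).$ The starting point is the off-axis analogue of \eqref{F: wave matrix form}: for $f,g\in\rng F^*$ with $f=F^*\tilde f,$ $g=F^*\tilde g,$ and $z=\lambda\pm iy,$ $y>0,$ the identity \eqref{F: a(z)} together with the definition of the evaluation operator and $R_z(H_1)^*=R_{\bar z}(H_1)$ yields
\[
\scal{\euE_z(H_1)f,w(z;H_1,H_0)\euE_z(H_0)g}_\clK = \frac{y}{\pi}\scal{R_z(H_1)f,R_z(H_0)g}_\clH.
\]
Combining with \eqref{F: Ew} rewrites the left-hand side as $\frac{1}{\pi}\scal{\tilde f,(1-T_{\bar z}(H_1)J)\Im T_{\lambda+iy}(H_0)\tilde g}_\clK.$ Since $\lambda\in\Lambda(H_0,F)\cap\Lambda(H_1,F),$ as $y\to 0^+$ the operators $T_{\bar z}(H_1)$ and $\Im T_{\lambda+iy}(H_0)$ converge in norm to $T_{\lambda\mp i0}(H_1)$ and $\Im T_{\lambda+i0}(H_0)$ respectively, so this expression has a finite norm limit, which establishes existence of the limit in \eqref{F: wave matrix form}.

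For the norm bound and construction, unitarity of $w(z;H_1,H_0)$ gives
\[
\bigl|\scal{\euE_z(H_1)f,w(z;H_1,H_0)\euE_z(H_0)g}\bigr| \leq \|\euE_z(H_1)f\|\,\|\euE_z(H_0)g\|.
\]
By norm continuity of the square root on bounded positive operators, $\euE_z(H_j)f\to \euE_\lambda(H_j)f$ in $\clK$ for $f\in\rng F^*,$ so the limit is bounded by $\|\euE_\lambda(H_1)f\|\,\|\euE_\lambda(H_0)g\|.$ The set $\{\euE_\lambda(H_j)F^*\tilde f:\tilde f\in\clK\}=\rng\sqrt{\pi^{-1}\Im T_{\lambda+i0}(H_j)}$ is by construction dense in $\hlambda(H_j),$ so the limiting sesquilinear form extends uniquely by continuity to a bounded sesquilinear form on $\hlambda(H_1)\times\hlambda(H_0)$ of norm $\leq 1.$ Riesz representation then supplies the operator $w_\pm(\lambda;H_1,H_0)$ of the required form, uniquely determined by \eqref{F: wave matrix form} by density.

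To obtain the $y=0$ analogues of \eqref{F: a(z)}, \eqref{F: Ew}, \eqref{F: wE}, I would read off from the construction the core identity
\[
\sqrt{\Im T_{\lambda+i0}(H_1)}\,w_\pm(\lambda;H_1,H_0)\sqrt{\Im T_{\lambda+i0}(H_0)} = (1-T_{\lambda\mp i0}(H_1)J)\,\Im T_{\lambda+i0}(H_0),
\]
which is the $y\to 0^+$ limit of the corresponding off-axis equality. The analogous limiting form of the right-hand side of \eqref{F: wE} multiplied on the left by $\sqrt{\Im T_{\lambda+iy}(H_1)}$ simultaneously identifies this operator with $\Im T_{\lambda+i0}(H_1)(1+JT_{\lambda\pm i0}(H_0)).$ The $y=0$ versions of \eqref{F: Ew} and \eqref{F: wE} then follow by cancelling one factor of $\sqrt{\Im T_{\lambda+i0}(H_0)}$ on the right or $\sqrt{\Im T_{\lambda+i0}(H_1)}$ on the left, which is justified because each of these square roots is injective on the respective fibre subspace (its kernel on $\clK$ being precisely the orthogonal complement of $\hlambda(H_j)$) and because $w_\pm$ takes values in $\hlambda(H_1).$

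The principal obstacle is ensuring that the various boundary limits and range conditions are compatible: one must check that the limiting sesquilinear form descends unambiguously to the fibres (not merely to the pre-fibre $\clK$), and that the norm bound $\leq 1$ survives the extension. Both points are handled by the twin facts that $w(z;H_1,H_0)$ is unitary off the axis and that the boundary values $T_{\lambda\pm i0}(H_j)$ and $\Im T_{\lambda+i0}(H_j)$ exist in $\clB(\clK)$ by the hypothesis $\lambda\in\Lambda(H_0,F)\cap\Lambda(H_1,F).$
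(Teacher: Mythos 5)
Your proposal is correct and takes essentially the same route as the paper's proof: the form \eqref{F: wave matrix form} is bounded by $\|\euE_\lambda(H_1)f\|\,\|\euE_\lambda(H_0)g\|$ via Cauchy--Schwarz (equivalently, unitarity of the off-axis wave matrix), extended by density of $\rng\sqrt{\Im T_{\lambda+i0}(H_j)}$ in $\hlambda(H_j),$ and the $y=0$ identities are obtained by passing to the limit in \eqref{F: a(z)}, \eqref{F: Ew}, \eqref{F: wE} and cancelling square-root factors by density. Your explicit justification of the existence of the limit via \eqref{F: Ew} and of the left-cancellation via the range condition on $w_\pm$ only makes explicit what the paper leaves implicit.
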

\begin{proof}
The first part of this proof follows a standard method, see e.g. \cite[\S 5.2]{Ya}. 
The formula \eqref{F: wave matrix form} defines a sesquilinear form on $\rng\euE_{\lambda}(H_1)\times \rng\euE_{\lambda}(H_0),$ which we will show is bounded, with bound $\leq 1,$ and hence defines a bounded operator \eqref{F: wave matrix} from the closure of $\rng\euE_{\lambda}(H_0)$ to the closure of $\rng\euE_{\lambda}(H_1).$
For $y>0$ and any $f=F^*\varphi,g=F^*\psi\in\rng F^*,$ 
\begin{align*}
 \frac{y}{\pi}|\scal{R_{\lambda\pm iy}(H_1)f,R_{\lambda\pm iy}(H_0)g}| &\leq \frac{y}{\pi}\|R_{\lambda\pm iy}(H_1)F^*\varphi\|\|R_{\lambda\pm iy}(H_0)F^*\psi\|
\\&= \frac{1}{\pi}\scal{\varphi,\Im T_{\lambda+iy}(H_1)\varphi}^{1/2}\scal{\psi,\Im T_{\lambda+iy}(H_0)\psi}^{1/2}
\\&=\|\euE_{\lambda+iy}(H_1)f\|\|\euE_{\lambda+iy}(H_0)g\|
\end{align*}
Then from \eqref{F: wave matrix form} and since $\lambda\in\Lambda(H_0,F)\cap\Lambda(H_1,F),$ by taking the limit $y\to 0^+$ we obtain
\[
 |\scal{\euE_{\lambda}(H_1)f,w_\pm(\lambda;H_1,H_0)\euE_{\lambda}(H_0)g}| \leq \|\euE_{\lambda}(H_1)f\|\|\euE_{\lambda}(H_0)g\|.
\]
Now we show that the equalities \eqref{F: a(z)}, \eqref{F: Ew}, and \eqref{F: wE} imply analogous equalities for the wave matrices. 
Indeed, for any $\varphi,\psi\in\clK,$ from the definition \eqref{F: wave matrix form} we obtain
\begin{align*}
&\scal{\sqrt{\Im T_{\lambda+i0}(H_1)}\varphi,w_\pm(\lambda;H_1,H_0)\sqrt{\Im T_{\lambda+i0}(H_0)}\psi}
\\&\qquad\qquad= \lim_{y\to 0^+} \scal{\sqrt{\Im T_{\lambda + iy}(H_1)}\varphi,w(\lambda\pm iy;H_1,H_0)\sqrt{\Im T_{\lambda\pm iy}(H_0)}\psi}.
\end{align*}
Then \eqref{F: a(z)}, \eqref{F: Ew}, and \eqref{F: wE}, imply that the left hand side of the above equality is
\begin{align}
 LHS &= \scal{\varphi,\lim_{y\to 0^+} yFR_{\lambda\mp iy}(H_1)R_{\lambda\pm iy}(H_0)F^*\psi} \label{F: a(z) for y=0} 
\\&= \scal{\varphi,(1-T_{\lambda-i0}(H_1)J)\sqrt{\Im T_{\lambda+i0}(H_0)}\sqrt{\Im T_{\lambda+i0}(H_0)}\psi} \label{F: Ew for y=0}
\\&= \scal{\sqrt{\Im T_{\lambda+i0}(H_1)}\varphi,\sqrt{\Im T_{\lambda+i0}(H_1)}(1+JT_{\lambda+i0}(H_0))\psi}. \label{F: wE for y=0}
\end{align}
The equality \eqref{F: a(z) for y=0} implies \eqref{F: a(z)} in the case that $y=0,$ considered as an equality of bounded operators on $\clK.$
Similarly, by the density of the range of $\sqrt{\Im T_{\lambda+i0}(H)}$ in $\hlambda(H),$ the equality \eqref{F: Ew for y=0} implies \eqref{F: Ew} in the case that $y=0,$ considered as an equality of bounded operators from $\hlambda(H_0)$ to $\clK,$ 
and \eqref{F: wE for y=0} implies \eqref{F: wE} in the case that $y=0,$ considered as an equality of bounded operators from $\clK$ to $\hlambda(H_1).$ 
\end{proof}

\begin{thm}\label{T: wave matrix}
Let $F\colon\clH\to\clK$ be a rigging operator and let $H_0,H_1$ and $H_2$ be self-adjoint operators on $\clH$ whose pairwise differences belong to $F^*\clB_{sa}(\clK)F.$ 
The wave matrices satisfy the multiplicative property
\begin{equation}\label{F: mult prop}
 w_\pm(\lambda;H_2,H_0) = w_\pm(\lambda;H_2,H_1)w_\pm(\lambda;H_1,H_0),
\end{equation}
for any $\lambda\in\Lambda(H_0,F)\cap\Lambda(H_1,F)\cap\Lambda(H_2,F).$
In addition, the wave matrices \eqref{F: wave matrix} are unitary and satisfy the equalities $w(\lambda;H_0,H_0) = 1$ and $w^*_\pm(\lambda;H_1,H_0) = w_\pm(\lambda;H_0,H_1).$
\end{thm}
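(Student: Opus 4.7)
The plan is to dispatch the two easier identities first, use them along with the wave-matrix equations already established in Proposition \ref{P: wave matrix} to obtain the multiplicative property, and then deduce unitarity as a formal consequence.

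For $w_\pm(\lambda;H_0,H_0)=1$, I would compute directly from \eqref{F: wave matrix form} with $H_1=H_0$. For $f,g\in\rng F^*$ the right-hand side equals $\lim_{y\to 0^+}\frac{1}{\pi}\langle f,\Im R_{\lambda+iy}(H_0)g\rangle$, since $R_{\lambda\pm iy}(H_0)^*R_{\lambda\pm iy}(H_0)=y^{-1}\Im R_{\lambda+iy}(H_0)$ in both sign cases. After pulling out $F^*$ this becomes $\langle\euE_\lambda(H_0)f,\euE_\lambda(H_0)g\rangle$, so $w_\pm(\lambda;H_0,H_0)$ acts as the identity on the dense subspace $\rng\euE_\lambda(H_0)$ of $\hlambda(H_0)$. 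For $w_\pm^*(\lambda;H_1,H_0)=w_\pm(\lambda;H_0,H_1)$ I would take complex conjugates in \eqref{F: wave matrix form}; the conjugated right-hand side is the defining expression for $w_\pm(\lambda;H_0,H_1)$ with the arguments swapped, and matching inner products on the dense subspaces gives the identity.

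The main step is the multiplicative property. Write $H_j-H_i=F^*J_{ji}F$, so that $J_{20}=J_{21}+J_{10}$. Fix $\psi\in\clK$ and write $T_z$ in place of $T_{\lambda\pm i0}$ for brevity. Applying the $y=0$ version of \eqref{F: wE} from Proposition \ref{P: wave matrix} twice,
\begin{align*}
w_\pm(\lambda;H_2,H_1)w_\pm(\lambda;H_1,H_0)\sqrt{\Im T_z(H_0)}\,\psi
&= w_\pm(\lambda;H_2,H_1)\sqrt{\Im T_z(H_1)}\,(1+J_{10}T_z(H_0))\psi \\
&= \sqrt{\Im T_z(H_2)}\,(1+J_{21}T_z(H_1))(1+J_{10}T_z(H_0))\psi,
\end{align*}
whereas by the same identity applied once $w_\pm(\lambda;H_2,H_0)\sqrt{\Im T_z(H_0)}\psi=\sqrt{\Im T_z(H_2)}(1+J_{20}T_z(H_0))\psi$. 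The two right-hand sides agree because the sandwiched second resolvent identity $T_z(H_1)+T_z(H_1)J_{10}T_z(H_0)=T_z(H_0)$, valid off-axis and carried to the boundary by membership of $\lambda$ in all three LAP sets, gives $(1+J_{21}T_z(H_1))(1+J_{10}T_z(H_0))=1+(J_{21}+J_{10})T_z(H_0)=1+J_{20}T_z(H_0)$. Since $\rng\sqrt{\Im T_{\lambda+i0}(H_0)}$ is dense in $\hlambda(H_0)$ by definition, \eqref{F: mult prop} follows.

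Unitarity is then formal: combining the three previous items yields $w_\pm(\lambda;H_1,H_0)w_\pm^*(\lambda;H_1,H_0)=w_\pm(\lambda;H_1,H_0)w_\pm(\lambda;H_0,H_1)=w_\pm(\lambda;H_1,H_1)=1$ and symmetrically on the other side, so $w_\pm(\lambda;H_1,H_0)$ is a unitary isomorphism $\hlambda(H_0)\to\hlambda(H_1)$. The main obstacle I anticipate is exactly the algebraic rearrangement in the multiplicative step; the potential trap is that \eqref{F: wE} and the second resolvent identity must both be applied in their boundary form, which is legitimate only because $\lambda$ belongs to each of the three LAP sets simultaneously.
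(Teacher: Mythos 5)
Your proof is correct, and it takes a somewhat different route from the paper's. For the multiplicative property the paper sandwiches $w_\pm(\lambda;H_2,H_0)$ between $\sqrt{\Im T_{\lambda+i0}(H_2)}$ and $\sqrt{\Im T_{\lambda+i0}(H_0)}$ and passes to the limit $y\to 0^+$ from the off-axis wave matrices, whose multiplicativity is immediate from $w(z;H_1,H_0)=U_z^*(H_1)U_z(H_0)$; the convergence of all the sandwiched products is supplied by Proposition \ref{P: wave matrix}. You instead stay entirely on the boundary, applying the $y=0$ version of \eqref{F: wE} twice and reducing everything to the algebraic identity $(1+J_{21}T(H_1))(1+J_{10}T(H_0))=1+J_{20}T(H_0)$, which is the sandwiched second resolvent identity; this is a legitimate and arguably more self-contained computation, at the price of invoking the boundary resolvent identity explicitly rather than inheriting it from the off-axis picture. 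The order of the remaining deductions also differs: the paper obtains $w_\pm^*(\lambda;H_1,H_0)=w_\pm(\lambda;H_0,H_1)$ \emph{and} unitarity together from multiplicativity, $w_\pm(\lambda;H_0,H_0)=1$, and the norm bound $\leq 1$ (an invertible contraction with contractive inverse is unitary), whereas you get the adjoint formula directly by conjugating the defining sesquilinear form \eqref{F: wave matrix form}, after which unitarity is immediate and the norm bound is not needed. One cosmetic caution: your shorthand $\sqrt{\Im T_z}$ with $T_z=T_{\lambda\pm i0}$ is ill-formed in the minus case, since $\Im T_{\lambda-i0}=-\Im T_{\lambda+i0}\leq 0$; as in \eqref{F: Ew}--\eqref{F: wE}, the square roots must always be of $\Im T_{\lambda+i0}$, and only the unsandwiched factors $T_z(H_j)$ carry the sign $\pm$.
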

\begin{proof}
To prove \eqref{F: mult prop} it is enough to show that for any $f,g\in\rng F^*$
\begin{multline*}
 \scal{\euE_{\lambda}(H_2)f,w_\pm(\lambda;H_2,H_0)\euE_{\lambda}(H_0)g} \\= \scal{\euE_\lambda(H_2)f,w_\pm(\lambda;H_2,H_1)w_\pm(\lambda;H_1,H_0)\euE_\lambda(H_0)g}.
\end{multline*}
This can be inferred from the following equality of bounded operators on the auxiliary Hilbert space $\clK.$
\begin{align*}
&\sqrt{\Im T_{\lambda+i0}(H_2)}w_\pm(\lambda;H_2,H_0)\sqrt{\Im T_{\lambda+i0}(H_0)} 
\\&\qquad = \lim_{y\to 0^+}\sqrt{\Im T_{\lambda+iy}(H_2)}w(\lambda\pm iy;H_2,H_0)\sqrt{\Im T_{\lambda+iy}(H_0)}
\\&\qquad = \lim_{y\to 0^+}\sqrt{\Im T_{\lambda+iy}(H_2)}w(\lambda\pm iy;H_2,H_1)w(\lambda\pm iy;H_1,H_0)\sqrt{\Im T_{\lambda+iy}(H_0)}
\\&\qquad = \sqrt{\Im T_{\lambda+i0}(H_2)}w_\pm(\lambda;H_2,H_1)w_\pm(\lambda;H_1,H_0)\sqrt{\Im T_{\lambda+i0}(H_0)}.
\end{align*}
Here, the first equality follows from \eqref{F: a(z)} and its analogue in the case $y=0$ (Proposition \ref{P: wave matrix}),
the second equality uses the multiplicative property of the off-axis wave matrix, 
while the final equality follows from \eqref{F: Ew} and \eqref{F: wE} and their analogues in the case $y=0.$ 

The multiplicative property can now be used to prove the remaining properties. 
Firstly, it follows easily from the definition \eqref{F: wave matrix form} that $w_\pm(\lambda,H_0,H_0) = 1.$
Combining this with the multiplicative property, we have
\begin{gather*}
w_\pm(\lambda;H_1,H_0)w_\pm(\lambda;H_0,H_1) = w_\pm(\lambda;H_1,H_1) = 1,
\\
w_\pm(\lambda;H_0,H_1)w_\pm(\lambda;H_1,H_0) = w_\pm(\lambda;H_0,H_0) = 1.
\end{gather*}
From these equalities and the fact that $\|w_\pm(\lambda)(H_1,H_0)\|\leq 1,$ it can be inferred that the wave matrices are unitary and $w_\pm^*(\lambda;H_1,H_0) = w_\pm(\lambda;H_0,H_1).$
\end{proof}

\begin{thm}\label{T: stat formula}
Let $F\colon\clH\to\clK$ be a rigging operator and let $H_0$ and $H_1$ be self-adjoint operators such that $H_1 - H_0 = F^*JF$ for some $J\in\clB_{sa}(\clK).$ 
The off-axis scattering matrix and the scattering matrix itself, defined respectively by
\begin{gather*}
 S(z;H_1,H_0) := w^*(z;H_1,H_0)w(\bar{z};H_1,H_0), \quad z=\lambda+iy,\, y>0,
\quad \text{and}
\\ S(\lambda;H_1,H_0) := w^*_+(\lambda;H_1,H_0)w_-(\lambda;H_1,H_0), \quad \lambda\in\Lambda(H_0,F)\cap\Lambda(H_1,F),
\end{gather*}
both satisfy the formula:
\begin{equation}\label{F: stat formula}
 S(z;H_1,H_0) = 1 - 2i \sqrt{\Im T_{z}(H_0)}J(1 - T_z(H_1)J)\sqrt{\Im T_{z}(H_0)},
\end{equation}
where for the scattering matrix, put $z=\lambda$ on the left and $z=\lambda+i0$ on the right.
\end{thm}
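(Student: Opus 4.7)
The plan is to derive the formula by a short algebraic computation, handling the off-axis case first and then adapting it to the on-axis case by substituting the limiting versions of the identities \eqref{F: Ew} and \eqref{F: wE} guaranteed by Proposition \ref{P: wave matrix}. The key observation is that unitarity of $w(z;H_1,H_0)$ lets us write
\[
  S(z;H_1,H_0) - 1 = w^*(z;H_1,H_0)\bigl[w(\bar z;H_1,H_0) - w(z;H_1,H_0)\bigr],
\]
which we then sandwich by $\sqrt{\Im T_{\lambda+iy}(H_0)}$ from the right in order to apply \eqref{F: wE}.

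First, apply \eqref{F: wE} at both $z$ and $\bar z.$ The two right-hand sides differ only in the factor $J T_z(H_0)$ versus $J T_{\bar z}(H_0),$ so subtraction gives
\[
 [w(\bar z;H_1,H_0) - w(z;H_1,H_0)]\sqrt{\Im T_{\lambda+iy}(H_0)} = -2i\sqrt{\Im T_{\lambda+iy}(H_1)}\,J\,\Im T_z(H_0),
\]
using $T_{\bar z}(H_0) - T_z(H_0) = -2i\,\Im T_z(H_0).$ Next, take the adjoint of \eqref{F: Ew} (using self-adjointness of $J$ and $T_z(H)^* = T_{\bar z}(H)$) to obtain
\[
 w^*(z;H_1,H_0)\sqrt{\Im T_{\lambda+iy}(H_1)} = \sqrt{\Im T_{\lambda+iy}(H_0)}\,(1 - JT_z(H_1)).
\]
Substituting into the previous display and commuting $J$ past the parenthesised factor via $(1 - JT_z(H_1))J = J(1 - T_z(H_1)J),$ we arrive at
\[
 \bigl(S(z;H_1,H_0) - 1\bigr)\sqrt{\Im T_z(H_0)} = -2i\sqrt{\Im T_z(H_0)}\,J(1 - T_z(H_1)J)\,\Im T_z(H_0).
\]
Writing $\Im T_z(H_0) = \sqrt{\Im T_z(H_0)}\cdot\sqrt{\Im T_z(H_0)}$ on the far right and cancelling this final factor, which has dense range in $\clK$ for $y>0,$ yields \eqref{F: stat formula} for the off-axis scattering matrix.

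For the on-axis scattering matrix, repeat the entire computation verbatim with $w(z)$ replaced by $w_+(\lambda;H_1,H_0)$ and $w(\bar z)$ replaced by $w_-(\lambda;H_1,H_0),$ now invoking the $y=0$ versions of \eqref{F: Ew} and \eqref{F: wE} from Proposition \ref{P: wave matrix}, together with the unitarity of the wave matrices between the fibre spaces established in Theorem \ref{T: wave matrix}. The cancellation at the final step is now valid on the fibre $\hlambda(H_0),$ in which $\rng \sqrt{\Im T_{\lambda+i0}(H_0)}$ is dense by definition.

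There is no real obstacle here beyond careful bookkeeping -- the identities \eqref{F: Ew}, \eqref{F: wE} and their $y=0$ analogues do all the work, and unitarity provides the only nontrivial ingredient. The one point to watch is the asymmetry between $z$ and $\bar z$ in the definition of the off-axis $S$-matrix, which is precisely what produces the factor $-2i\,\Im T_z(H_0)$ after subtraction; and in the on-axis case, one must remember that the resulting identity lives on the fibre $\hlambda(H_0)$ rather than on $\clK.$
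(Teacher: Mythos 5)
Your proof is correct and is essentially the paper's argument: both rest on the intertwining identities \eqref{F: Ew} and \eqref{F: wE} (and their $y=0$ analogues from Proposition \ref{P: wave matrix}) and conclude by the density of $\rng\sqrt{\Im T_z(H_0)}$ in $\clK$ (resp.\ in $\hlambda(H_0)$). The only difference is cosmetic: the paper computes the two-sided sandwich $\sqrt{\Im T_z(H_0)}\,S\,\sqrt{\Im T_z(H_0)}$ and then invokes the second resolvent identity, whereas you factor $S-1=w^*(z)[w(\bar z)-w(z)]$ and multiply on the right only, so the factor $-2i\,\Im T_z(H_0)$ drops out directly from $T_{\bar z}(H_0)-T_z(H_0)$.
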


As a consequence of \eqref{F: stat formula}, if $\lambda$ belongs to the intersection $\Lambda(H_0,F)\cap\Lambda(H_1,F),$ then the limit $S(\lambda+i0;H_1,H_0)$ of the off-axis scattering matrix is equal to  $S(\lambda;H_1,H_0)\oplus 1$ acting on $\clK = \hlambda(H_0)\oplus\hlambda(H_0)^\perp.$

\begin{proof}
Let $z=\lambda+iy,$ $y\geq 0,$ where we assume $\lambda\in\Lambda(H_0,F)\cap\Lambda(H_1,F)$ if $y=0.$
Using \eqref{F: Ew} and \eqref{F: wE}, or Proposition \ref{P: wave matrix} for their analogues in the case $y=0,$ we obtain
\begin{align*}
 \sqrt{\Im T_{z}(H_0)}S(z;H_1,H_0)\sqrt{\Im T_{z}(H_0)} 
&= (1 + T_{\bar{z}}(H_0)J)\Im T_{z}(H_1)(1 + JT_{\bar{z}}(H_0))
\\&= \Im T_{z}(H_0)(1 - JT_{z}(H_1))(1 + JT_{\bar{z}}(H_0)).
\end{align*}
While it follows from the second resolvent identity that 
\begin{align*}
 (1 - JT_{z}(H_1))(1 + JT_{\bar{z}}&(H_0)) \\&= 1 - JT_z(H_1) + J(1 - T_z(H_1)J)T_{\bar{z}}(H_0)
\\&= 1 - J(1 - T_z(H_1)J)T_z(H_0) + J(1 - T_z(H_1)J)T_{\bar{z}}(H_0)
\\&= 1 -2iJ(1 - T_z(H_1)J)\Im T_z(H_0).
\end{align*}
Therefore, for any $f=F^*\varphi,g=F^*\psi\in\rng F^*$ we get
\begin{align*}
\langle\euE_z(H_0)f, S(z;H_1,H_0)&\euE_{z}(H_0)g\rangle 
\\& = \scal{\varphi,\Im T_{z}(H_0)\left[1 -2iJ(1 - T_z(H_1)J)\Im T_z(H_0)\right]\psi}
\\& = \scal{\euE_z(H_0)f, \left(\ldots\right)\euE_{z}(H_0)g},
\end{align*}
where $(\ldots)$ stands for the right hand side of \eqref{F: stat formula}. 
This implies \eqref{F: stat formula} by the density of the range of $\euE_{z}(H_0),$ which is dense in $\clK$ in the case that $y>0$ and dense in $\hlambda(H_0)$ in the case that $y=0.$
\end{proof}

\smallskip
The wave operators and the scattering operator can be built up from the wave matrices and scattering matrix.
For example, with $\Lambda:=\Lambda(H_0,F)\cap\Lambda(H_1,F),$ the (partial) wave operators can be defined by
\begin{equation}\label{F: WO defn}
 W_\pm(H_1,H_0;\Lambda) = \int^\oplus_\Lambda w_\pm(\lambda;H_1,H_0) \, d\lambda.
\end{equation}
Since the direct integral $\euH(H)$ is isomorphic to the subspace $E(\Lambda)\clH\subset\clH^{(a)}(H)$ by Theorem \ref{T: spectral thm}, it follows that the wave operators \eqref{F: WO defn} can be considered as operators 
\begin{equation}\label{F: WO on H}
W_\pm(H_1,H_0;\Lambda)\colon E^{H_0}(\Lambda)\clH \to E^{H_1}(\Lambda)\clH.
\end{equation}
As a result of Theorem \ref{T: wave matrix}, the wave operators \eqref{F: WO on H} are unitary, satisfy the equalities $W_\pm(H_0,H_0;\Lambda) = 1$ and $W_\pm^*(H_1,H_0;\Lambda) = W_\pm(H_0,H_1;\Lambda),$ and admit the multiplicative property $W_\pm(H_2,H_0;\Lambda) = W_\pm(H_2,H_1;\Lambda)W_\pm(H_1,H_0;\Lambda)$ if $\Lambda = \Lambda(H_0,F)\cap\Lambda(H_1,F)\cap\Lambda(H_2,F).$
If $\Lambda$ is a full set, then these properties imply:
\begin{thm}\label{T: Kato-Rosenblum}
Let $F\colon\clH\to\clK$ be a rigging operator and let $H_0$ and $H_1$ be self-adjoint operators such that $H_1-H_0\in F^*\clB_{sa}(\clK)F.$ 
Suppose the Limiting Absorption Principle holds for $H_0$ and $H_1.$ 
Then their absolutely continuous parts are unitarily equivalent. 
\end{thm}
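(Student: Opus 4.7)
The plan is to unpack the construction sketched in the paragraph just before the theorem and check that it actually produces a unitary operator intertwining the absolutely continuous parts. The idea is that once the wave matrices $w_\pm(\lambda;H_1,H_0)$ exist fibre-wise and are unitary between $\hlambda(H_0,F)$ and $\hlambda(H_1,F)$ for all $\lambda$ in a full set, the direct integral of these matrices is a unitary intertwiner between the direct integral diagonalisations of the absolutely continuous parts supplied by Theorem \ref{T: spectral thm}.

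First I would observe that, under the Limiting Absorption Principle, $\Lambda(H_0,F)$ and $\Lambda(H_1,F)$ are both full sets, so their intersection $\Lambda := \Lambda(H_0,F)\cap\Lambda(H_1,F)$ is also a full set; in particular $\Lambda(H_j,F)\setminus\Lambda$ has Lebesgue measure zero for $j=0,1$. By the remark that $\mbR\setminus\Lambda(H_j,F)$ is a core of the singular spectrum of $H_j$, the projection $E^{H_j}(\Lambda(H_j,F))$ coincides with the projection onto $\clH^{(a)}(H_j)$. Since the absolutely continuous spectral measure annihilates the null set $\Lambda(H_j,F)\setminus\Lambda$, we also have $E^{H_j}(\Lambda)\clH = \clH^{(a)}(H_j)$ for $j=0,1$.

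Next, for each $\lambda\in\Lambda$ the wave matrix $w_+(\lambda;H_1,H_0)\colon\hlambda(H_0,F)\to\hlambda(H_1,F)$ is unitary by Theorem \ref{T: wave matrix}. I would verify that the field $\{w_+(\lambda;H_1,H_0)\}_{\lambda\in\Lambda}$ is weakly measurable, using the defining formula \eqref{F: wave matrix form} together with the measurable dependence of $\euE_\lambda(H_j)$ and of the sandwiched resolvents on $\lambda$, so that the direct integral
\[
 W_+(H_1,H_0;\Lambda) = \int_{\Lambda}^{\oplus} w_+(\lambda;H_1,H_0)\,d\lambda
\]
defines a unitary operator from $\euH(H_0)$ to $\euH(H_1)$. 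Via Theorem \ref{T: spectral thm}, the partial isometries $\euE(H_0)$ and $\euE(H_1)$ identify $\euH(H_j)$ with $E^{H_j}(\Lambda)\clH = \clH^{(a)}(H_j)$, and transport $W_+$ to a unitary operator
\[
 U := \euE(H_1)^*\, W_+(H_1,H_0;\Lambda)\, \euE(H_0) \colon \clH^{(a)}(H_0) \to \clH^{(a)}(H_1).
\]

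The final step is to show that $U$ intertwines the absolutely continuous parts, i.e. $U H_0^{(a)} \subset H_1^{(a)} U$. For this I would use the diagonalisation identity \eqref{F: H diagonalised} in Theorem \ref{T: spectral thm}: both $\euE(H_0)$ and $\euE(H_1)$ carry the corresponding operator to multiplication by $\lambda$ on their direct integrals, and $W_+(H_1,H_0;\Lambda)$, being a direct integral of fibre-wise operators, commutes with multiplication by $\lambda$. The intertwining relation for $U$ follows immediately. The main technical point requiring care is the measurability of $\lambda\mapsto w_+(\lambda;H_1,H_0)$ and the precise identification of $\euH(H_j)$ with $\clH^{(a)}(H_j)$ under the partial isometries of Theorem \ref{T: spectral thm}; the rest of the argument is essentially formal once the pieces of Section \ref{S: SST} are in hand.
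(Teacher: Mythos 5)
Your proposal is correct and follows essentially the same route as the paper, which derives the theorem directly from the preceding construction of the wave operators as direct integrals of the unitary wave matrices, identified with operators $E^{H_0}(\Lambda)\clH\to E^{H_1}(\Lambda)\clH$ via Theorem \ref{T: spectral thm}. You simply unpack the details (measurability of the fibre family, identification of $E^{H_j}(\Lambda)\clH$ with $\clH^{(a)}(H_j)$ under the LAP, and the intertwining via \eqref{F: H diagonalised}) that the paper leaves implicit.
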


\begin{thm}
Under the premise of Theorem \ref{T: Kato-Rosenblum}, the wave operators \eqref{F: WO on H} coincide with their classical time-dependent definition. 
That is, the strong operator limits
\[
 s\mbox{-}\hspace{-0.6em}\lim_{t \to \pm \infty} e^{itH_1}e^{-itH_0}P^{(a)}(H_0),
\]
where $P^{(a)}(H_0)$ is the projection onto the absolutely continuous subspace $\clH^{(a)}(H_0),$ exist and coincide with the wave operators defined by \eqref{F: WO on H} and extended to $\clH$ as zero on $E^{H_0}(\mbR\setminus\Lambda).$
\end{thm}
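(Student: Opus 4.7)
The proof has two stages: establish existence of the strong limits using Kato's smooth perturbation theorem, then identify them with the stationary wave operators of \eqref{F: WO defn} via an Abelian mean computation.

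Stage 1 (Existence). Let $\Delta \subset \Lambda(H_0,F)$ be a compact set on which $\lambda \mapsto \|\Im T_{\lambda+i0}(H_0)\|$ is bounded; by the Limiting Absorption Principle such $\Delta$ exhaust $\Lambda(H_0,F)$ up to a null set. For $f \in E^{H_0}(\Delta)\clH$, the diagonalization of $H_0$ provided by Theorem \ref{T: spectral thm} combined with Plancherel's theorem for $\clK$-valued Fourier transforms yields
\[
\int_{-\infty}^\infty \|Fe^{-itH_0}f\|_\clK^2\,dt = 2\int_\Delta \|\sqrt{\Im T_{\lambda+i0}(H_0)}(\euE(H_0)f)(\lambda)\|^2\,d\lambda \leq 2\sup_\Delta\|\Im T_{\lambda+i0}(H_0)\|\cdot\|f\|^2.
\]
This is the $H_0$-smoothness of $F$ on $\Delta$ in Kato's sense, and the analogous bound holds for $H_1$. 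Applying Kato's smooth perturbation theorem over an exhausting sequence of such $\Delta$ establishes existence of $s\text{-}\lim_{t\to\pm\infty} e^{itH_1}e^{-itH_0}E^{H_0}(\Lambda(H_0,F))f$ for every $f \in \clH$. Since under LAP one has $E^{H_0}(\Lambda(H_0,F)) = P^{(a)}(H_0)$ (see the paragraph following Theorem \ref{T: spectral thm}), this coincides with the expression in the statement.

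Stage 2 (Identification). Call the resulting limits $\tilde W_\pm$. By Abel's theorem applied to $t \mapsto (e^{itH_1}e^{-itH_0}f, g)$,
\[
(\tilde W_+ f, g) = \lim_{y\to 0^+} 2y \int_0^\infty e^{-2yt}(e^{itH_1}e^{-itH_0}f, g)\,dt.
\]
Substituting the representation $R_{\lambda+iy}(H)h = i\int_0^\infty e^{it(\lambda+iy)}e^{-itH}h\,dt$ for both $H_0$ and $H_1$ and integrating over $\lambda \in \mbR$ first (using $\int_\mbR e^{i(s-t)\lambda}d\lambda = 2\pi\delta(s-t)$) rewrites the right hand side as
\[
(\tilde W_+ f, g) = \lim_{y\to 0^+}\frac{y}{\pi}\int_\mbR \scal{R_{\lambda+iy}(H_0)f, R_{\lambda+iy}(H_1)g}\,d\lambda.
\]
For $f = F^*\varphi, g = F^*\psi \in \rng F^*$, interchanging $\lim_{y\to 0^+}$ with the $\lambda$-integral and comparing pointwise with \eqref{F: a(z)} at $y=0$ (Proposition \ref{P: wave matrix}), together with the adjoint relation $w_+(\lambda;H_1,H_0)^* = w_+(\lambda;H_0,H_1)$ from Theorem \ref{T: wave matrix}, identifies the integrand as $\scal{w_+(\lambda;H_1,H_0)(\euE(H_0)f)(\lambda), (\euE(H_1)g)(\lambda)}$. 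Therefore
\[
(\tilde W_+ f, g) = \int_\Lambda \scal{w_+(\lambda;H_1,H_0)(\euE(H_0)f)(\lambda), (\euE(H_1)g)(\lambda)}\,d\lambda = (W_+(H_1,H_0;\Lambda)f, g)
\]
by \eqref{F: WO defn} and Theorem \ref{T: spectral thm}. The case $t\to -\infty$ is analogous, using $R_{\lambda-iy}$ in place of $R_{\lambda+iy}$, and yields $\tilde W_- = W_-(H_1,H_0;\Lambda)$.

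The main technical obstacle is the dominated convergence argument needed to interchange $\lim_{y\to 0^+}$ with the $\lambda$-integral in Stage 2. This is handled by restricting $f$ and $g$ initially to have spectral densities supported in a single $\Delta$ from Stage 1, where all integrands are bounded and compactly supported; the identification then extends from $\rng F^*$ to all of $\clH$ by density and continuity of both $\tilde W_+$ and $W_+(H_1,H_0;\Lambda)$.
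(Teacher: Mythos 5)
Your proposal is correct, but it reaches the conclusion by a different route than the paper. The paper's sketch first invokes the Limiting Absorption Principle to obtain the \emph{weak} (Abelian) wave operators, checks that these coincide with \eqref{F: WO defn}, and then uses the already-established multiplicative property of Theorem \ref{T: wave matrix} to upgrade weak convergence to strong convergence via the standard weak-to-strong criterion. You instead prove \emph{strong} existence directly: your Plancherel computation, which is a nice self-contained application of the diagonalisation in Theorem \ref{T: spectral thm}, shows that $F$ is locally $H_0$- and $H_1$-smooth on compact subsets exhausting $\Lambda,$ so Cook's method (the Kato--Lavine local smoothness theorem) gives the strong limits; your Stage 2 is then essentially the same Abel-mean identification that underlies the paper's weak-operator step. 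The trade-off is clear: the paper's route gets existence almost for free from Theorem \ref{T: wave matrix} but leans on the weak-to-strong lemma, while yours imports the local smooth perturbation theorem but never needs the multiplicative property for existence. Two details deserve more care. First, your compact sets $\Delta$ must sit inside $\Lambda(H_0,F)\cap\Lambda(H_1,F)$ with \emph{both} $\|\Im T_{\lambda+i0}(H_0)\|$ and $\|\Im T_{\lambda+i0}(H_1)\|$ bounded, which is possible since both sets are full. Second, the interchange of $\lim_{y\to0^+}$ with the $\lambda$-integral in Stage 2 is the genuine technical crux, and restricting to spectral supports in a single $\Delta$ does not by itself supply a dominating function; the standard fix is to note that $\tfrac{y}{\pi}\|R_{\lambda+iy}(H)h\|^2$ converges a.e.\ to $\|(\euE(H)h)(\lambda)\|^2$ while its integral converges to $\|E(\Lambda)h\|^2$ by \eqref{F: Ef integrable}, hence it converges in $L_1(\mbR),$ and Cauchy--Schwarz then controls the cross term. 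With these repairs the argument is complete.
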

\noindent The proof of this theorem uses a standard method and will not be used further, so we only briefly sketch a proof.
The Limiting Absorption Principle implies the existence of the weak wave operators (see e.g. \cite[Theorem 5.3.2]{Ya}). 
It can be checked that the weak wave operators coincide with the wave operators defined by formula \eqref{F: WO defn}.
Combined with the multiplicative property of the wave operators, this implies the existence of the strong wave operators and that they coincide with \eqref{F: WO on H} (see e.g. \cite[Theorem 2.2.1]{Ya}).

\subsection{SSF and the scattering matrix}
In this subsection we reinstate the resolvent comparable assumption \eqref{F: resolvent comparable};
throughout, $\clA(F)$ will be as defined in Section \ref{S: sSSF}. 
A piecewise analytic path $H_r$ in $\clA(F),$ will be written as $H_r = H_0 + F^*J_rF,$ where $J_r$ is a piecewise analytic path in $\clB_{sa}(\clK)$ such that $J_0 = 0.$
By $\clU_1(\clH) := \{U \in 1 + \clL_1(\clK) : U$ is unitary$\},$ we denote the group of unitary operators differing from 1 by a trace class operator, with the complete metric $(U,V) \mapsto \|U - V\|_1.$

Let $z=\lambda+iy,$ $y\geq 0,$ where if $y=0$ we assume that $\lambda$ belongs to the set $\Lambda(H_0,F;\clL_1).$ 
Consider the stationary formula for $S(z;H_r,H_0)$:
\begin{equation}\label{F: Hr stat formula}
S(z;H_r,H_0) = 1 - 2i\sqrt{\Im T_{z}(H_0)}J_r(1 + T_{z}(H_0)J_r)^{-1}\sqrt{\Im T_{z}(H_0)}, 
\end{equation}
which follows from Theorem \ref{T: stat formula} and the second resolvent identity.
The operator $\Im T_z(H_0)$ is trace class and hence $S(z;H_r,H_0)$ belongs to $\clU_1(\clK),$ provided that \eqref{F: Hr stat formula} holds.
If $y>0,$ then it holds for all $r.$
On the other hand if $y=0,$ then it holds as long as $r$ is not a resonance point of the path $H_r.$

The stationary formula \eqref{F: Hr stat formula} allows us to consider the function
\begin{equation}\label{F: S(r)}
r \mapsto S(z;H_r,H_0) \in \clU_1(\clK).
\end{equation} 
Supposing that $H_r$ is analytic, the analytic Fredholm alternative implies that the factor $(1 + T_{z}(H_0)J_r)^{-1}$ is meromorphic. 
Hence in a neighbourhood of the real axis \eqref{F: S(r)} is a meromorphic function. 
If $y>0,$ then \eqref{F: S(r)} is in fact holomorphic in a neighbourhood of $\mbR.$
If $y=0,$ then although the factor $(1 + T_{\lambda+i0}(H_0)J_r)^{-1}$ has poles at real resonance points $r\in R(\lambda;\{H_r\}),$ the scattering matrix $S(\lambda; H_r, H_0)$ is unitary for all non-resonant real $r.$ 
Therefore \eqref{F: S(r)} must be bounded on the real axis and so admits analytic continuation there.

\begin{thm}\label{T: S'(r)}
Let $H_r$ be an analytic path in $\clA(F)$ and let $z=\lambda+iy,$ $y\geq 0,$ where in the case that $y=0$ it is assumed that $\lambda\in\Lambda(H_0,F;\clL_1).$ 
Then at any non-resonant real $r,$ the derivative of $S(z;H_r,H_0)$ with respect to $r$ is given by \eqref{F: S ODE}, where the derivative is taken in the norm of $\clL_1(\clK)$
(while the formula \eqref{F: S ODE} is given for the scattering matrix, an analogous formula holds in the case that $y>0).$
\end{thm}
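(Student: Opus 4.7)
The plan is to differentiate the stationary formula \eqref{F: Hr stat formula} directly with respect to $r$ and then identify the resulting expression with the right-hand side of \eqref{F: S ODE} using Proposition \ref{P: wave matrix} and the multiplicativity/unitarity of the wave matrices from Theorem \ref{T: wave matrix}. Abbreviating $A_0 := \sqrt{\Im T_z(H_0)}$ and $T_0 := T_z(H_0)$, equation \eqref{F: Hr stat formula} reads $S(z;H_r,H_0) - 1 = -2i\,A_0 J_r(1+T_0 J_r)^{-1}A_0$. The operator $A_0$ is Hilbert-Schmidt (for $y > 0$ this is automatic from $FR_z(H_0)\in\clL_2$; for $y=0$ it is the assumption $\lambda\in\Lambda(H_0,F;\clL_1)$), so any sandwiched term of the form $A_0\cdot(\text{bounded})\cdot A_0$ lies in $\clL_1(\clK)$.

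Applying the product rule and then using the elementary identities $(1+T_0 J_r)^{-1}T_0 = T_z(H_r)$ and $(1+T_0 J_r)^{-1} = 1 - T_z(H_r)J_r$ (both immediate consequences of the second resolvent identity for the sandwiched resolvent), the derivative of $S(z;H_r,H_0)$ should collapse algebraically to
\begin{equation*}
 -2i\,A_0(1 - J_r T_z(H_r))\,\dot{J}_r\,(1 - T_z(H_r) J_r)\,A_0,
\end{equation*}
with convergence in $\clL_1(\clK)$ automatic from the Hilbert-Schmidt sandwiching. The $y=0$ analogue of \eqref{F: Ew} from Proposition \ref{P: wave matrix} reads $\sqrt{\Im T_{\lambda+i0}(H_r)}\,w_+(\lambda;H_r,H_0) = (1 - T_{\lambda-i0}(H_r)J_r)A_0$; taking its adjoint and invoking $w_+^*(\lambda;H_r,H_0) = w_+(\lambda;H_0,H_r)$ from Theorem \ref{T: wave matrix} identifies the left-hand outer factor above as $w_+(\lambda;H_0,H_r)\sqrt{\Im T_{\lambda+i0}(H_r)}$. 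The corresponding analogue for $w_-$ identifies the right-hand outer factor as $\sqrt{\Im T_{\lambda+i0}(H_r)}\,w_-(\lambda;H_r,H_0)$, and the multiplicative property combined with the unitarity $w_+(\lambda;H_r,H_0)w_+(\lambda;H_0,H_r) = 1$ on $\hlambda(H_r)$ gives $w_-(\lambda;H_r,H_0) = w_+(\lambda;H_r,H_0)S(\lambda;H_r,H_0)$. Substitution then yields \eqref{F: S ODE}. The case $y > 0$ is entirely analogous, with off-axis wave matrices $w(z;\cdot,\cdot)$ and $w(\bar z;\cdot,\cdot)$ replacing $w_\pm(\lambda;\cdot,\cdot)$.

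The main delicate point is justifying the operator-norm differentiability of $r\mapsto(1+T_0 J_r)^{-1}$ at a non-resonant real $r$ in the case $y=0$, since for $y > 0$ everything is trivially smooth. This reduces to the analytic Fredholm alternative: $T_{\lambda+i0}(H_0)$ is the operator-norm limit of the trace class family $\{T_{\lambda+iy}(H_0)\}_{y>0}$ and is therefore compact, making $T_{\lambda+i0}(H_0)J_r$ a compact-operator-valued analytic function of $r$; its inverse, bounded at the non-resonant point by definition, extends analytically to a neighbourhood, from which both operator-norm and (via the Hilbert-Schmidt sandwich by $A_0$) trace-norm differentiability of $S(z;H_r,H_0)$ follow.
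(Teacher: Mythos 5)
Your argument is correct, and it reaches \eqref{F: S ODE} by a genuinely different route from the paper's. You differentiate the stationary formula \eqref{F: Hr stat formula} directly at an arbitrary non-resonant $r$: the product rule together with $(1+T_0J_r)^{-1}T_0=T_z(H_r)$ and $(1+T_0J_r)^{-1}=1-T_z(H_r)J_r$ does collapse the derivative to $-2i\,A_0(1-J_rT_z(H_r))\dot J_r(1-T_z(H_r)J_r)A_0$, and the identification of the outer factors via the $y=0$ analogues of \eqref{F: Ew} (for $w_+$ and for $w_-$, the latter obtained from $z=\lambda-iy$ so that $T_{\bar z}$ becomes $T_{\lambda+i0}$), together with $w_-(\lambda;H_r,H_0)=w_+(\lambda;H_r,H_0)S(\lambda;H_r,H_0),$ is exactly right; note that $\lambda\in\Lambda(H_r,F)$ is what makes these wave matrices available, and that is precisely the non-resonance of $r.$ The paper instead exploits $J_0=0$ to read off the derivative \emph{only at the basepoint} $r=0$ (where it is immediate from the stationary formula, with no product rule), and then transports it to general $r$ via the factorisation $S(\lambda;H_{r+h},H_0)=w_+(\lambda;H_0,H_r)S(\lambda;H_{r+h},H_r)w_+(\lambda;H_r,H_0)S(\lambda;H_r,H_0).$ Your version costs more algebra but is more self-contained, needing only the adjoint relations of Proposition \ref{P: wave matrix} and Theorem \ref{T: wave matrix} rather than the re-basing of the path at $H_r$; the paper's version localises all analytic issues at $r=0.$ Your treatment of trace-norm differentiability via the Hilbert--Schmidt sandwich by $A_0$ and the analytic Fredholm alternative is also the right mechanism; one small correction is that the operators $T_{\lambda+iy}(H_0)=(FR_{\lambda+iy}(H_0))F^*$ are Hilbert--Schmidt rather than trace class in general (only $\Im T$ is trace class on $\Lambda(H_0,F;\clL_1)$), but compactness of the norm limit $T_{\lambda+i0}(H_0)$ is all your Fredholm argument requires.
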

\begin{proof}
We consider the case when $y = 0;$ 
in case $y>0,$ the formula \eqref{F: S ODE} holds for any real $r$ and the calculation is identical.
The derivative of the meromorphic function $r\mapsto J_r(1+T_z(H_0)J_r)^{-1}$ appearing in the stationary formula \eqref{F: Hr stat formula} can be calculated for any non-resonant $r,$ yet since $J_0 = 0$ its derivative at 0 is simply $\dot{J}_0.$ 
Hence it follows from the stationary formula that
\begin{equation}\label{F: ISM}
 \frac d{dr} \Big|_{r=0} S(\lambda; H_{r}, H_0)  = - 2 i \sqrt{\Im T_{\lambda+i0}(H_0)}\dot{J}_0\sqrt{\Im T_{\lambda+i0}(H_0)}
\end{equation}
Following from the properties of the wave matrix, the scattering matrix satisfies 
\[
 S(\lambda;H_{r+h},H_0) = w_+(\lambda;H_0,H_r)S(\lambda;H_{r+h},H_r)w_+(\lambda;H_r,H_0)S(\lambda;H_r,H_0)
\]
for any non-resonant $r$ and $r+h.$	
If $r$ is non-resonant, then since the resonance set $R(\lambda;\{H_r\})$ is discrete, so is $r+h$ for small $h.$
Thus
\begin{multline*}
 \frac d{dr} S(\lambda; H_{r}, H_0) 
\\= w_+(\lambda; H_0,H_r)\frac d{dh} \Big|_{h=0}S(\lambda; H_{r+h}, H_r)w_+(\lambda; H_r,H_0)S(\lambda; H_r, H_0)
\end{multline*}
and the proof is completed by substituting \eqref{F: ISM}.
\end{proof} 

It follows from Theorem \ref{T: S'(r)} and the unitarity of the scattering matrix that the trace class valued function
\begin{equation*}
r\mapsto w_+(\lambda; H_0,H_r)\sqrt{\Im T_{\lambda+i0}(H_r)}\dot{J}_r\sqrt{\Im T_{\lambda+i0}(H_r)}w_+(\lambda; H_r,H_0),
\end{equation*}
although only defined for non-resonant values of $r,$ admits analytic continuation to the real axis.
Taking its trace shows that the function 
$
 r\mapsto \Tr(\dot{J}_r\Im T_{\lambda+i0}(H_r))
$
also admits analytic continuation to the real axis, as was required in the proof of Theorem \ref{T: sSSF = res ind}.

Theorem \ref{T: S'(r)} implies the ordered exponential representation \eqref{F: S = Texp}, where $H_r$ is a piecewise analytic path in $\clA(F)$ whose endpoints are not resonant at $\lambda.$
There is an analogous representation for the off-axis scattering matrix.
Because the ordinary differential equation \eqref{F: S ODE} can be considered in the trace class norm, we also obtain the following theorem from properties of the ordered exponential (see \cite[Appendix]{Az3v6}).

\begin{thm}\label{T: B-K}
Let $H_r$ be a piecewise analytic path in $\clA(F).$ 
For $z=\lambda+iy,$ $y\geq 0,$ where it is assumed that $\lambda$ belongs to the intersection $\Lambda(H_0,F;\clL_1)\cap\Lambda(H_1,F;\clL_1)$ if $y=0,$ there is the formula
\begin{equation}\label{F: B-K}
 \det S(z; H_1, H_0) = \exp\left(-2i \int_0^1 \Tr(\dot{J}_r\Im T_{z}(H_r))\,dr\right).
\end{equation}
\end{thm}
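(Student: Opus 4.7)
The plan is to deduce the Birman--Krein type formula \eqref{F: B-K} from the differential equation \eqref{F: S ODE} established in Theorem~\ref{T: S'(r)} by applying the standard determinant (Jacobi--Liouville) identity for ordered exponentials in the trace class.

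First I would reduce to the case that $H_r$ is an analytic (not merely piecewise analytic) path, since both sides of \eqref{F: B-K} are clearly multiplicative over concatenation of paths and additive in the exponent. Fix $z = \lambda + iy$ as in the statement. From Theorem~\ref{T: S'(r)} the function $r \mapsto S(z;H_r,H_0) \in \clU_1(\clK)$ satisfies, at every non-resonant real $r,$
\[
 \frac{d}{dr} S(z;H_r,H_0) = -2i\, w_+(\lambda;H_0,H_r)\sqrt{\Im T_z(H_r)}\,\dot J_r\sqrt{\Im T_z(H_r)}\, w_+(\lambda;H_r,H_0)\, S(z;H_r,H_0),
\]
where the derivative is taken in $\clL_1(\clK)$ (and when $y > 0$ the identity holds for every real $r$ with no exceptional points). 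Equivalently, $S(z;H_r,H_0)$ is the ordered exponential \eqref{F: S = Texp} of the trace class valued coefficient appearing as the left factor above.

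The key step is then the determinant formula for such ordered exponentials (the Appendix of \cite{Az3v6}): if $U(r) \in \clU_1(\clK)$ satisfies $U'(r) = A(r) U(r)$ in the trace class norm with $U(0) = 1$ and $A(r) \in \clL_1(\clK),$ then
\[
 \det U(r) = \exp\Bigl(\int_0^r \Tr A(s)\, ds\Bigr).
\]
Applying this with $A(r) = -2i\, w_+(\lambda;H_0,H_r)\sqrt{\Im T_z(H_r)}\,\dot J_r\sqrt{\Im T_z(H_r)}\, w_+(\lambda;H_r,H_0),$ the cyclicity of the trace together with the unitarity $w_+(\lambda;H_r,H_0)w_+(\lambda;H_0,H_r) = 1$ from Theorem~\ref{T: wave matrix} gives
\[
 \Tr A(r) = -2i\, \Tr\bigl(\sqrt{\Im T_z(H_r)}\,\dot J_r\sqrt{\Im T_z(H_r)}\bigr) = -2i\, \Tr\bigl(\dot J_r\,\Im T_z(H_r)\bigr),
\]
by another application of cyclicity, and then integrating from $0$ to $1$ yields precisely \eqref{F: B-K}, since $S(z;H_0,H_0) = 1$ makes the boundary term vanish.

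The step most likely to require care is handling the case $y = 0$ at real $r$ belonging to the resonance set $R(\lambda;\{H_r\}).$ At such $r$ neither the coefficient $A(r)$ nor $S(\lambda;H_r,H_0)$ need be well defined. However $R(\lambda;\{H_r\})$ is discrete in $[0,1],$ the endpoint assumption $\lambda \in \Lambda(H_0,F;\clL_1) \cap \Lambda(H_1,F;\clL_1)$ rules out $r = 0, 1,$ and we noted already after Theorem~\ref{T: S'(r)} that the scalar integrand $r \mapsto \Tr(\dot J_r\,\Im T_z(H_r))$ extends analytically across these points, so the integral in \eqref{F: B-K} is unambiguously defined. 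The matrix-valued coefficient $A(r)$ likewise admits analytic continuation across each resonant point (again by the remark following Theorem~\ref{T: S'(r)}), so the ordered exponential is defined and the determinant identity applies across the resonant points; alternatively one splits $[0,1]$ at the finitely many resonances and patches the pieces using the multiplicativity of $\det$ and the additivity of the exponent.
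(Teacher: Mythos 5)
Your proposal is correct and follows essentially the same route as the paper, which obtains Theorem \ref{T: B-K} directly from the trace-norm ODE of Theorem \ref{T: S'(r)} together with the determinant property of the ordered exponential from the Appendix of \cite{Az3v6}; the cyclicity-of-trace computation and the treatment of the discrete resonance set via the analytic continuation noted after Theorem \ref{T: S'(r)} are exactly the details the paper leaves implicit.
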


This theorem can be interpreted as a variant of the Birman-Krein formula. 
The Birman-Krein formula \eqref{F: det S = exp(-2pi i xi)} can be recovered as follows. 
For $y>0,$ the formula \eqref{F: B-K} can be rewritten as 
\begin{equation}\label{F: B-K y>0}
\det S(z; H_1, H_0) = e^{- 2\pi i \,\xi(z;H_1,H_0)},
\end{equation}
where $\xi(z;H_1,H_0)$ is the smoothed SSF \eqref{F: smoothed SSF}, whose limit as $y\to 0^+$ is a.e. equal to the SSF $\xi(\lambda;H_1,H_0).$
On the other hand, it follows from the stationary formula that for any $\lambda$ from the set $\Lambda(H_0,F; \clL_1) \cap \Lambda(H_1,F; \clL_1),$ the off-axis scattering matrix $S(\lambda+iy;H_1,H_0)$ converges in $\clU_1(\clK)$ to $S(\lambda;H_1,H_0)\oplus 1$ as $y\to 0^+.$
Therefore, taking the limit of \eqref{F: B-K y>0} as $y\to 0^+$ results in the equality \eqref{F: det S = exp(-2pi i xi)}, which holds for a.e. $\lambda\in\Lambda(H_0,F; \clL_1) \cap \Lambda(H_1,F; \clL_1).$

In addition to \eqref{F: det S = exp(-2pi i xi)}, Theorem \ref{T: B-K} explicitly gives the formula
\[
\det S(\lambda; H_1, H_0) = \exp\left(-2i \int_0^1 \Tr(\dot{J}_r\Im T_{\lambda+i0}(H_r))\,dr\right).
\]
for any $\lambda\in\Lambda(H_0,F;\clL_1)\cap\Lambda(H_1,F; \clL_1).$ 
This may be rewritten as \eqref{F: det S = exp(-2pi i xia)} in view of Theorem \ref{T: a.c.SSF}.
Combining \eqref{F: det S = exp(-2pi i xi)} and \eqref{F: det S = exp(-2pi i xia)} gives the equality 
\[
e^{-2\pi i \,\xi^{(s)}(\lambda;\{H_r\})} = 1,
\] 
which holds for a.e. $\lambda\in\Lambda(H_1,F;\clL_1)\cap\Lambda(H_0,F;\clL_1).$ 	
Thus we have again proved the integer-valuedness of the singular SSF, 
this time along any piecewise analytic path $H_r.$
To repeat and summarise:
\begin{cor}
Let $H_r$ be a piecewise $C^1$ path in $\clA(F).$ Then for a.e. $\lambda\in\mbR,$ 
$e^{-2\pi i \xi(\lambda;H_1,H_0)} = \det S(\lambda;H_1,H_0) = e^{-2\pi i \xi^{(a)}(\lambda;\{H_r\})}$ and $\xi^{(s)}(\lambda;\{H_r\})\in\mbZ.$
\end{cor}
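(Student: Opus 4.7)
The plan is to read the corollary as a direct consequence of the two Birman-Krein-type identities established in the preceding discussion, combined with the decomposition $\xi = \xi^{(a)} + \xi^{(s)}$ from Corollary \ref{C: sSSF}. There is really nothing fundamentally new to prove; the work consists in carefully assembling the pieces and passing to the limit $y \to 0^+$ in the off-axis setting.

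First I would derive $\det S(\lambda; H_1, H_0) = e^{-2\pi i \xi(\lambda; H_1, H_0)}$ by taking $y \to 0^+$ in the off-axis version \eqref{F: B-K y>0} of Theorem \ref{T: B-K}. On the exponent side, the smoothed SSF $\xi(\lambda+iy; H_1, H_0)$ converges a.e.\ to $\xi(\lambda; H_1, H_0)$ by \eqref{F: smoothed SSF converges to SSF}. On the determinant side, the stationary formula \eqref{F: stat formula} shows that $S(\lambda+iy; H_1, H_0)$ converges in $\clU_1(\clK)$ to $S(\lambda; H_1, H_0) \oplus 1$ for every $\lambda$ in the full set $\Lambda(H_0, F; \clL_1) \cap \Lambda(H_1, F; \clL_1)$, so determinants converge. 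Secondly, I would specialise Theorem \ref{T: B-K} directly to $y = 0$, producing
\begin{equation*}
\det S(\lambda; H_1, H_0) = \exp\Bigl(-2i \int_0^1 \Tr\bigl(\dot{J}_r \Im T_{\lambda + i0}(H_r)\bigr)\,dr\Bigr),
\end{equation*}
and identify the integrand divided by $\pi$ as the density $\xi^{(a)}(\lambda; \{H_r\})$ using Theorem \ref{T: a.c.SSF}. The two equalities combine with the decomposition $\xi = \xi^{(a)} + \xi^{(s)}$ from Corollary \ref{C: sSSF} to yield $e^{-2\pi i \xi^{(s)}(\lambda; \{H_r\})} = 1$, hence $\xi^{(s)}(\lambda; \{H_r\}) \in \mbZ$ a.e.

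The principal obstacle is that the auxiliary theorems invoked (most notably \ref{T: B-K} and \ref{T: a.c.SSF}, on which Corollary \ref{C: sSSF} also rests) are stated for piecewise analytic paths, whereas the corollary permits piecewise $C^1$ paths. To close this gap I would approximate a piecewise $C^1$ path in $\clA(F)$ by piecewise analytic paths with matching endpoints in the norm $\|\cdot\|_F$, observing that $\xi(\lambda; H_1, H_0)$ and $\det S(\lambda; H_1, H_0)$ depend only on endpoints and so are unaffected. The step requiring most care is showing that the absolutely continuous component $\xi^{(a)}(\{H_r\})$ of the integrated one-form is continuous under such approximation -- the Lebesgue decomposition of $\Phi_H(V)$ is sensitive to $H$, so one cannot simply invoke norm continuity of the full measure. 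The Helffer-Sj\"ostrand formula \eqref{F: H-S DOI}, together with the trace bound $\|\dot{J}_r\|\|FE_{H_r}(\Delta)\|_2^2$ appearing in the proof of Theorem \ref{T: a.c.SSF} via Lemma \ref{L: a.c.ISSM}, should supply the required local $L_1$ control of the densities, after which the identities pass to the limit and integrality of $\xi^{(s)}$ persists.
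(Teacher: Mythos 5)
Your argument is essentially identical to the paper's: it obtains \eqref{F: det S = exp(-2pi i xi)} by letting $y\to 0^+$ in \eqref{F: B-K y>0} (using \eqref{F: smoothed SSF converges to SSF} on the exponent side and the $\clU_1(\clK)$-convergence of the off-axis scattering matrix on the determinant side), obtains \eqref{F: det S = exp(-2pi i xia)} by specialising Theorem \ref{T: B-K} to $y=0$ and invoking Theorem \ref{T: a.c.SSF}, and then combines the two via Corollary \ref{C: sSSF}. The only divergence is your closing paragraph on upgrading from piecewise analytic to piecewise $C^1$ paths --- a point the paper itself glosses over, since its derivation is carried out for piecewise analytic paths while the corollary is stated for piecewise $C^1$ ones --- so your caution there is a reasonable supplement rather than a deviation.
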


\subsection{Singular SSF and singular \texorpdfstring{$\mu$}{mu}-invariant}
Theorem \ref{T: B-K} allows the equality of the singular SSF and the so called singular $\mu$-invariant, Theorem \ref{T: sSSF and mu inv} below, to be proved following (a simplified version of) the argument appearing in \cite{Az3v6}. 
We conclude by sketching the proof.

Let $U = U(t)$ be a continuous path of operators in $\clU_1(\clK).$ 
The eigenvalues of $U$ can be continuously enumerated, in the sense that there exists a sequence of continuous functions $\lambda_j$ such that for all $t$ the multiset $\{\lambda_1(t),\lambda_2(t),\ldots\}^*$ coincides with the spectrum of $U(t),$ counting multiplicities of all points except 1.
The functions $\lambda_j$ can be used to define the spectral flow of the path $U.$
Here we only need to consider paths which begin (or end) at 1, so suppose $U(0)=1.$
Then, lifting the functions $\lambda_j$ on the circle to the functions $\theta_j$ on $\mbR$ with $\theta_j(0)=0,$ we can count the number of times the eigenvalues of $U(t)$ cross a point $e^{i\theta},$ $\theta\in(0,2\pi),$ using the formula
\begin{equation}\label{F: mu defn}
\mu(\theta;U) := \sum_{j=1}^\infty \left\lfloor \frac{\theta_j(1) - \theta}{2\pi} \right\rfloor,
\end{equation}
where $\lfloor x\rfloor$ is the integer part of $x.$
If $U(0)=U(1)=1,$ then $\mu(\theta;U)$ does not depend on $\theta.$
The spectral flow $\mu$ is a homotopy invariant: if two paths $U_1,U_2\in\clU_1(\clK)$ are homotopic relative to their endpoints, then $\mu(\theta;U_1)=\mu(\theta;U_2).$ 
It is also path additive: if $U_1\sqcup U_2$ is the concatenation of $U_1$ and $U_2,$ then $\mu(\theta;U_1\sqcup U_2) = \mu(\theta;U_1) + \mu(\theta;U_2).$

For a path $U$ in $\clU_1(\clK)$ beginning at 1, it can be shown using the formula \eqref{F: mu defn} that
\[
\xi(U) := -\frac{1}{2\pi}\int_0^{2\pi} \mu(\theta;U) \,d\theta = -\frac{1}{2\pi}\sum_{j=1}^{\infty}\theta_j(1).
\]
Further, letting $U_t$ denote the restriction of the path $U$ to the interval $[0,t],$ it can be shown that $t\mapsto \xi(U_t)$ is continuous.
On the other hand, for a unitary operator $U(t)$ from $\clU_1(\clK)$ with eigenvalues $e^{i\theta_j(t)},$ $j=1,2,\ldots,$ we have
\[
\det U(t) = \prod_{j=1}^{\infty} e^{i\theta_j(t)} = \exp\left(i\sum_{j=1}^\infty\theta_j(t)\right).
\]
Therefore, for any path $U$ in $\clU_1(\clK)$ which begins at 1, there is the formula
\begin{equation}\label{F: log det}
\xi(U_t) = -\frac{1}{2\pi i} \log\det U(t),
\end{equation}
where the branch of the logarithm is chosen so that the right hand side is continuous.

Let $H_0,H_1$ be two self-adjoint operators from $\clA(F).$
For any $\lambda$ from the set $\Lambda(H_0,F;\clL_1)\cap\Lambda(H_1,F;\clL_1),$ the scattering matrix $S(\lambda;H_1,H_0)$ can be naturally connected with the identity in two different ways. 
One way is to send the imaginary part of $\lambda + i0$ from 0 to $+\infty.$
Let $U_1$ be the path
\[
U_1\colon [-\infty,0]\ni y \mapsto S(\lambda-iy;H_1,H_0) \in \clU_1(\clK).
\]
Another way is to send $H_1$ to $H_0$ along a piecewise analytic path $H_r.$ 
Let $U_2$ be the path
\[
U_2\colon [0,1] \ni r \mapsto S(\lambda;H_r,H_0) \in \clU_1(\clK).
\]
Pushnitski's $\mu$-invariant \cite{Pus} is given by 
$
\mu(\theta,\lambda;H_1,H_0) := \mu(\theta; U_1),
$
whereas the absolutely continuous $\mu$-invariant \cite{Az3v6}, is given by
$
\mu^{(a)}(\theta,\lambda;\{H_r\}) := \mu(\theta; U_2).
$
The difference $\mu^{(s)}(\lambda;\{H_r\}) := \mu(\theta;U_1) - \mu(\theta;U_2),$ which does not depend on $\theta,$ is the {\em singular $\mu$-invariant.}

Applying formula \eqref{F: log det} to the path $U_1$ and using Theorem \ref{T: B-K}, we obtain a formula for the smoothed SSF:
\[
\xi(\lambda+iy;H_1,H_0) = -\frac{1}{2\pi}\int_0^{2\pi} \mu(\theta,\lambda+iy;H_1,H_0) \,d\theta,
\]
where $\mu(\theta,\lambda+iy;H_1,H_0)$ denotes the spectral flow of the path $[-\infty,-y]\ni t \mapsto S(\lambda-it;H_1,H_0).$
Since the smoothed SSF converges a.e. to the SSF $\xi(\lambda;H_1,H_0)$ by \eqref{F: smoothed SSF converges to SSF}, it follows that 
\begin{equation}\label{F: SSF as mu-inv}
\xi(\lambda;H_1,H_0) = -\frac{1}{2\pi}\int_0^{2\pi} \mu(\theta,\lambda;H_1,H_0) \,d\theta,
\end{equation}
for a.e. $\lambda\in\Lambda(H_0,F;\clL_1)\cap\Lambda(H_1,F;\clL_1).$

Applying formula \eqref{F: log det} to the path $U_2$ and using Theorem \ref{T: B-K}, we find a formula for the absolutely continuous SSF:
\begin{equation}\label{F: a.c. SSF as a.c. mu-inv}
\xi^{(a)}(\lambda;\{H_r\}) = -\frac{1}{2\pi}\int_0^{2\pi} \mu^{(a)}(\theta,\lambda;\{H_r\}) \,d\theta.
\end{equation}
Combining the equalities \eqref{F: SSF as mu-inv} and \eqref{F: a.c. SSF as a.c. mu-inv}, we find that for a.e. $\lambda\in\Lambda(H_0,F;\clL_1)\cap\Lambda(H_1,F;\clL_1),$
\begin{align*}
\xi^{(s)}(\lambda;\{H_r\}) &= \xi(\lambda;H_1,H_0) - \xi^{(a)}(\lambda;\{H_r\})
\\ &= -\frac{1}{2\pi}\int_0^{2\pi} \left( \mu(\theta,\lambda;H_1,H_0) - \mu^{(a)}(\theta,\lambda;\{H_r\})\right) \,d\theta
\\ &= -\frac{1}{2\pi}\int_0^{2\pi} \mu^{(s)}(\lambda;\{H_r\}) \,d\theta
\\ &= -\mu^{(s)}(\lambda;\{H_r\}).
\end{align*}

\begin{thm}\label{T: sSSF and mu inv}
Let $H_r$ be a piecewise analytic path of self-adjoint operators from the affine space $\clA(F).$
Then for a.e. $\lambda$ from the full set $\Lambda(H_0,F;\clL_1)\cap\Lambda(H_1,F;\clL_1)$ there is the equality
$
 \xi^{(s)}(\lambda;\{H_r\}) = - \mu^{(s)}(\lambda;\{H_r\}).
$
\end{thm}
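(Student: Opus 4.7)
The plan is to use the $\log\det$ representation of spectral flow, formula \eqref{F: log det}, in combination with the generalised Birman–Krein identity of Theorem \ref{T: B-K} to convert both the SSF and the absolutely continuous SSF into $\theta$-averages of $\mu$-invariants along the two natural paths $U_1$ and $U_2$ that connect $S(\lambda;H_1,H_0)$ to the identity. Subtracting the two identities, the $\theta$-integrand collapses to the constant $\mu^{(s)}(\lambda;\{H_r\})$ and the result falls out.

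First, I would apply \eqref{F: log det} to the path $U_1\colon y\mapsto S(\lambda-iy;H_1,H_0)$, which begins at $1$ (since $T_{\lambda+iy}(H_r)\to 0$ as $|y|\to\infty$ by the stationary formula \eqref{F: Hr stat formula}) and ends at $S(\lambda;H_1,H_0)$ in the limit $y\to 0^+$. Combined with Theorem \ref{T: B-K} in the form \eqref{F: B-K y>0}, this expresses the smoothed SSF as
\[
 \xi(\lambda+iy;H_1,H_0) = -\frac{1}{2\pi}\int_0^{2\pi} \mu(\theta,\lambda+iy;H_1,H_0)\,d\theta,
\]
and letting $y\to 0^+$ and invoking \eqref{F: smoothed SSF converges to SSF} yields \eqref{F: SSF as mu-inv} for a.e.\ $\lambda\in\Lambda(H_0,F;\clL_1)\cap\Lambda(H_1,F;\clL_1)$.

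Second, the same procedure applied to the path $U_2\colon r\mapsto S(\lambda;H_r,H_0)$, which begins at $1$ since $S(\lambda;H_0,H_0)=1$, together with Theorem \ref{T: B-K} at $y=0$ and the formula \eqref{F: a.c.SSF} from Theorem \ref{T: a.c.SSF}, gives \eqref{F: a.c. SSF as a.c. mu-inv}. Finally, since by Corollary \ref{C: sSSF} we have $\xi^{(s)}=\xi-\xi^{(a)}$, subtracting the two integral representations and using that $\mu(\theta,\lambda;H_1,H_0)-\mu^{(a)}(\theta,\lambda;\{H_r\})=\mu^{(s)}(\lambda;\{H_r\})$ is independent of $\theta$ (as noted in the definition of the singular $\mu$-invariant), the $\theta$-integral is trivial and produces the claimed equality $\xi^{(s)}(\lambda;\{H_r\}) = -\mu^{(s)}(\lambda;\{H_r\})$.

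The main obstacle is the $y\to 0^+$ limit in the first step: I need that the spectral flow $\mu(\theta,\lambda+iy;H_1,H_0)$ of the truncated path behaves well enough in $y$ that its $\theta$-integral passes to the limit and matches the spectral flow of $U_1$ for a.e.\ $\lambda$. For this I would rely on the convergence noted after Theorem \ref{T: stat formula} of the off-axis scattering matrix $S(\lambda+iy;H_1,H_0)$ to $S(\lambda;H_1,H_0)\oplus 1$ in $\clU_1(\clK)$ when $\lambda\in\Lambda(H_0,F;\clL_1)\cap\Lambda(H_1,F;\clL_1)$, combined with the continuity of $t\mapsto\xi(U_t)$ established via \eqref{F: log det}. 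Everything else is a bookkeeping exercise that is already laid out in the preceding paragraphs.
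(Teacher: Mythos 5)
Your proposal is correct and follows essentially the same route as the paper: applying the $\log\det$ identity \eqref{F: log det} together with Theorem \ref{T: B-K} to the two paths $U_1$ and $U_2$, obtaining \eqref{F: SSF as mu-inv} and \eqref{F: a.c. SSF as a.c. mu-inv}, and subtracting using the $\theta$-independence of $\mu^{(s)}.$ Your explicit attention to the $y\to 0^+$ limit (via the $\clU_1(\clK)$-convergence of the off-axis scattering matrix and the continuity of $t\mapsto\xi(U_t)$) is exactly the justification the paper relies on.
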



\begin{thebibliography}{99}

\bibitem{Az3v6}
N.\,A.~Azamov,
Absolutely continuous and singular spectral shift functions. \textit{Dissertationes Math.} \textbf{480} (2011), 1--102.

\bibitem{Az9}
N.\,A.~Azamov,
Spectral flow inside essential spectrum. \textit{Dissertationes Math.} \textbf{518} (2016), 1--156.

\bibitem{AS2}
N.\,A.~Azamov and F.\,A.~Sukochev,
Spectral averaging for trace compatible operators. \textit{Proc. Amer. Math. Soc.} \textbf{136} (2008), 1769--1778.

\bibitem{BE}
M.\,Sh.~Birman and S.\,B.~\`Entina,
The stationary approach in abstract scattering theory. \textit{Izv. Akad. Nauk SSSR, Ser. Mat.} \textbf{31} (1967), 401--430; English translation in \textit{Math. USSR Izv.} \textbf{1} (1967), 391--420.

\bibitem{BK}
M.\,Sh.~Birman and M.\,G.~Kre\u\i n,
On the theory of wave operators and scattering operators (Russian). \textit{Dokl. Akad. Nauk SSSR} \textbf{144} (1962), 475--478.

\bibitem{BS75SM}
M.\,Sh.~Birman and M.\,Z.~Solomyak,
Remarks on the spectral shift function. \textit{J. Soviet Math.} \textbf{3} (1975), 408--419.

\bibitem{DimSjo}
M.~Dimassi and J.~Sj\"ostrand,
\textit{Spectral asymptotics in the semi-classical limit} London Math. Soc. Lecture Note Ser. 268, Cambridge University Press, 1999.

\bibitem{KK}
T.~Kato and S.\,T.~Kuroda,
The abstract theory of scattering. \textit{Rocky Mountain J. Math.} \textbf{1} (1971), 127--171.

\bibitem{Kr53MS}
M.\,G.~Kre\u\i n,
On the trace formula in perturbation theory. \textit{Mat. Sb.} \textbf{33} (1953), 597--626.

\bibitem{Kr62DAN}
M.\,G.~Kre\u\i n,
On perturbation determinants and the trace formula for unitary and self-adjoint operators (Russian). \textit{Dokl. Akad. Nauk SSSR} \textbf{144} (1962), 268--271.

\bibitem{Li52UMN}
I.\,M.~Lifshitz,
On a problem in perturbation theory connected with quantum statistics (Russian). \textit{Uspekhi Mat. Nauk} \textbf{7} (1952), 171--180.

\bibitem{Pus}
A.~Pushnitski, The spectral shift function and the invariance principle. \textit{J. Funct. Anal.} \textbf{183} (2001), pp. 269--320.

\bibitem{Sim}
B.~Simon,
\textit{Harmonic analysis.} A Comprehensive Course in Analysis, Part 3, Amer. Math. Soc., 2015.

\bibitem{Ya}
D.\,R.~Yafaev,
\textit{Mathematical scattering theory: general theory.} Trans. Math. Monographs 105, AMS, 1992.

\end{thebibliography}
\end{document}